\let\chapter\section               
\DeclareSymbolFont{fouriersymbols}{FMS}{futm}{m}{n}
\DeclareSymbolFont{fourierlargesymbols}{FMX}{futm}{m}{n}
\DeclareMathDelimiter{\tbar}{\mathord}{fouriersymbols}{152}{fourierlargesymbols}{147}
\newcommand{\jump}[1]{\left[\!\left[#1\right]\!\right]}
\def\NN{\hbox{\rlap{I}\kern.16em N}}
\def\NC{\hbox{\rlap{\kern.24em\raise.1ex\hbox
                  {\vrule height1.3ex width.9pt}}C}}
\def\ljump{{[\![}}
\def\rjump{{]\!]}}
\def\bn{{\bf n}}
\def\3bar{{|\hspace{-.01in}|\hspace{-.01in}|}}
\def\pT{{\partial T}}
\newtheorem{remark}{Remark}[section]
\newtheorem{theorem}{Theorem}[section]
\newtheorem{lemma}{Lemma}[section]
\numberwithin{equation}{section}
\begin{document}

\begin{frontmatter}



\title{An Immersed Weak Galerkin Method For Elliptic Interface Problems}


\author{Lin Mu\footnote{This author's research was supported in part by the U.S.~Department of Energy, Office of Science, Office of Advanced Scientific Computing Research, Applied Mathematics program under award number ERKJE45; and by the Laboratory Directed Research and Development program at the Oak Ridge National Laboratory, which is operated by UT-Battelle, LLC., for the U.S.~Department of Energy under Contract DE-AC05-00OR22725.}}
\address{Computer Science and Mathematics Division,
Oak Ridge National Laboratory, Oak Ridge, TN 37831, USA  (mul1@ornl.gov). 
}

\author{Xu Zhang\footnote{This author is partially supported by the National Science Foundation DMS-1720425.}}
\address{Department of Mathematics and Statistics, Mississippi State University, Mississippi State, MS 39762, USA (xuzhang@math.msstate.edu)}

\begin{abstract}
In this paper, we present an immersed weak Galerkin method for solving second-order elliptic interface problems. The proposed method does not require the meshes to be aligned with the interface. Consequently, uniform Cartesian meshes can be used for nontrivial interfacial geometry. We show the existence and uniqueness of the numerical algorithm, and prove the error estimates for the energy norm. Numerical results are reported to demonstrate the performance of the method. 
\end{abstract}

\begin{keyword}
immersed weak Galerkin \sep interface problems \sep Cartesian mesh \sep error estimate


\end{keyword}

\end{frontmatter}


\section{Introduction}
\label{S:1}

We consider the following elliptic interface equation
\begin{eqnarray}
-\nabla\cdot(\beta\nabla u)&=&f,~~~\mbox{in }~\Omega^{-}\cup\Omega^{+},\label{eq:pde}\\
u&=&g,~~\mbox{ on }~\partial\Omega,\label{eq:bc}
\end{eqnarray}
where the domain $\Omega\subset\mathbb{R}^2$ is separated by an interface curve $\Gamma$ into two subdomains $\Omega^{+}$ and $\Omega^{-}$. The diffusion coefficient $\beta({\bf x})$ is discontinuous across the interface. Without loss of generality, we assume $\beta({\bf x})$ is a piecewise constant function as follows
\begin{eqnarray*}
\beta({\bf x})=\begin{cases}
\beta^{-},\mbox{ if }{\bf x}\in\Omega^{-},\\
\beta^{+},\mbox{ if }{\bf x}\in\Omega^{+}.
\end{cases}
\end{eqnarray*}
The exact solution $u$ is required to satisfy the following homogeneous jump conditions
\begin{eqnarray}
\ljump u\rjump|_{\Gamma}&=&0,\label{eq:interface1}\\
\ljump{\beta}\nabla u\cdot{\bf n}\rjump|_{\Gamma}&=&0,\label{eq:interface2}
\end{eqnarray}
where ${\bf n}$ is the unit normal vector to the interface $\Gamma$. From now on, we define 
\begin{eqnarray*}
v=\begin{cases}
v^{-}({\bf x}),\mbox{ if }{\bf x}\in\Omega^{-},\\
v^{+}({\bf x}),\mbox{ if }{\bf x}\in\Omega^{+},
\end{cases}
\end{eqnarray*}
and denote $\ljump v\rjump|_{\Gamma}=v^{+}|_{\Gamma}-v^{-}|_{\Gamma}.$ 

Interface problems arise in many applications in science and engineering. The elliptic problem \eqref{eq:pde} - \eqref{eq:interface2} represents a typical interface model problem since it captures many fundamental physical phenomena. To solve interface problems, in general, there are two classes of numerical methods. The first class of methods uses interface-fitted meshes, i.e., the solution mesh is tailored to fit the interface. Methods of this type include classical finite element methods (FEM) \cite{1996BrambleKing,1998ChenZou}, discontinuous Galerkin methods \cite{2002ArnoldBrezziCockburnMarini,1999RiviereWheelerGirault}, and the virtual element methods \cite{2013VeigaBrezziCangianiManziniMariniRusso, 2016VeigaBrezziMariniRusso}. The second class of methods use unfitted meshes which are independent of the interface.  Structured uniform meshes such as Cartesian meshes are often utilized in these methods. The advantages of unfitted-mesh methods often emerge when the interface is geometrically complicated for which a high-quality body-fitting mesh is difficult to generate; or the simulation involves a dynamic moving interface, which requires repeated mesh generation.  In the past decades, many numerical methods based on unfitted meshes have been developed. For instance, the immersed interface methods \cite{1994LevequeLi, 2006LiIto}, cut finite element methods \cite{2015BurmanClausHansboLarsonMassing,2002HansboHansbo}, multi-scale finite element methods  \cite{2010ChuGrahamHou,1999HouWuCai}, extended finite element methods \cite{2001DolbowMoesBelytschko, 1999MoesDolbowBelytschko}, to name only a few.

The immersed finite element method (IFEM) is an class of unfitted mesh methods for interface problems.  The main idea of the immersed finite element method is to locally adjust the approximation function instead of solution mesh to resolve solution around the interface. The IFEM was first developed for elliptic interface problems \cite{2017AdjeridGuoLin, 2017CaoZhangZhang, 1998Li, 2003LiLinWu, 2001LinLinRogersRyan} and was recently applied to other interface model problems such as elasticity system \cite{2012LinZhang, 2013LinSheenZhang}, Stokes flow \cite{2015AdjeridChaabaneLin}, parabolic moving interface problems \cite{2013HeLinLinZhang, 2013LinLinZhang1}, etc. Recently, this \textit{immersed} idea has also been used in various numerical algorithms other than classical conforming FEM, such as nonconforming IFEM \cite{2010KwakWeeChang,2015LinSheenZhang}, immersed Petrov-Galerkin methods \cite{2005HouLiu,2010HouWangWang}, immersed discontinuous Galerkin methods \cite{2014HeLinLin,2015LinYangZhang1}, and immersed finite volume methods \cite{2017CaoZhangZhangZou,2009HeLinLin}. 

The weak Galerkin (WG) methods are a new class of finite element discretizations for solving partial differential equations (PDE) \cite{2013MuWangWangYe,2013WangYe}. In the framework of the WG method, classical differential operators are replaced by generalized differential operators as distributions. Unlike the classical FEM that impose continuity in the approximation space, the WG methods enforce the continuity weakly in the formulation using generalized discrete weak derivatives and parameter-free stabilizers. The WG methods are naturally extended from the standard FEM for functions with discontinuities, and thus are more advantageous over FEM in several aspects \cite{2018LiuTavenerWang,2015MuWangYe}. For instance, high-order WG spaces are usually constructed more conveniently than conforming FEM spaces since there is no continuity requirement on the approximation spaces. Also, the relaxation of the continuity requirement enables easy implementation of WG methods on polygonal meshes, and more flexibility for $h$- and $p$- adaptation. Moreover, the weak Galerkin methods is absolutely stable and there is no tuning parameter in the scheme, which is different from interior penalty discontinuous Galerkin (IPDG)methods. 


Recently, the WG methods have been studied for elliptic interface problems \cite{2013MuWangWeiYeZhao,2016MuWangYeZhao}. These WG methods require that solution mesh to be aligned with interface in order to get the optimal convergence. In this article, we will develop an immersed weak Galerkin (IWG) methods for elliptic interface problems. The proposed IWG method combines the advantages from both immersed finite element approximation and the weak Galerkin formulation. One apparent advantage of our IWG method over standard WG method is that it can be applied on unfitted meshes such as Cartesian meshes for solving elliptic interface problems. Comparing with the immersed IPDG methods \cite{2014HeLinLin,2015LinYangZhang1}, the matrix assembling in the IWG method assembles is more efficiently because all computation can be done locally within an element without exchange information from neighboring elements. 

The rest of the article is organized as follows. In Section 2, we recall the $P_1$ immersed finite element spaces that will be used to construct the WG approximation spaces. In Section 3, we introduce the IWG algorithm and discuss the well-posedness of the discretized problem. Section 4 is dedicated to the error analysis of the IWG algorithm. We will show that the errors measured in energy norm obey the optimal rate of convergence with respect to the polynomial degree of approximation space. In Section 5, we provide several numerical examples to demonstrate features of our IWG method.

\section{Immersed Finite Element Functions and Weak Galerkin Methods}

In this section, we introduce notations to be used in this article, and review the basic ideas of weak Galerkin methods, and immersed finite element spaces. Throughout this paper, we adopt notations of standard Sobolev spaces. 
For $m>1$, and any subset $G\subset\Omega$ that is cut through by the interface $\Gamma$, we define the following Hilbert spaces
\begin{eqnarray*}
\tilde{H}^m(G)=\{u\in H^1(G): u|_{G\cap\Omega^s}\in H^m(G\cap\Omega^s), \ s=+\mbox{ or }-\}
\end{eqnarray*}
equipped the norm and semi-norm:
\begin{eqnarray*}
\|u\|_{\tilde H^{m}(G)} =  \|u\|_{m,G\cap\Omega^+} + \|u\|_{m,G\cap\Omega^-},~~~~
|u|_{\tilde H^{m}(G)} =  |u|_{m,G\cap\Omega^+} + |u|_{m,G\cap\Omega^-}.
\end{eqnarray*}
\subsection{Immersed Finite Element Spaces}
Let $\mathcal{T}_h$ be a shape-regular triangular mesh of the domain $\Omega$. For every element $T\in\mathcal{T}_h$, we denote by $h_T$ its diameter. The mesh size of $\mathcal{T}_h$ is defined by $h=\max_{T\in\mathcal{T}_h} h_T$. Since the mesh $\mathcal{T}_h$ is independent of the interface, we often use Cartesian triangular mesh for simplicity, see Figure \ref{fig:mesh}. The interface $\Gamma$ may intersect with some elements in $\mathcal{T}_h$, which are called interface elements. The rest of elements are called regular elements, see Figure \ref{fig:ReAndIre}. The collections of interface elements and regular elements, are denoted by $\mathcal{T}_h^I$ and $\mathcal{T}_h^R$, respectively. Denote by $\mathcal{E}_h$ the set of all edges in $\mathcal{T}_h$, and let $\mathcal{E}_{h}^0=\mathcal{E}_h\backslash\partial\Omega$ be the set of all interior edges. 

\begin{figure}[!tb]
\begin{center}
\includegraphics[width=0.3\textwidth]{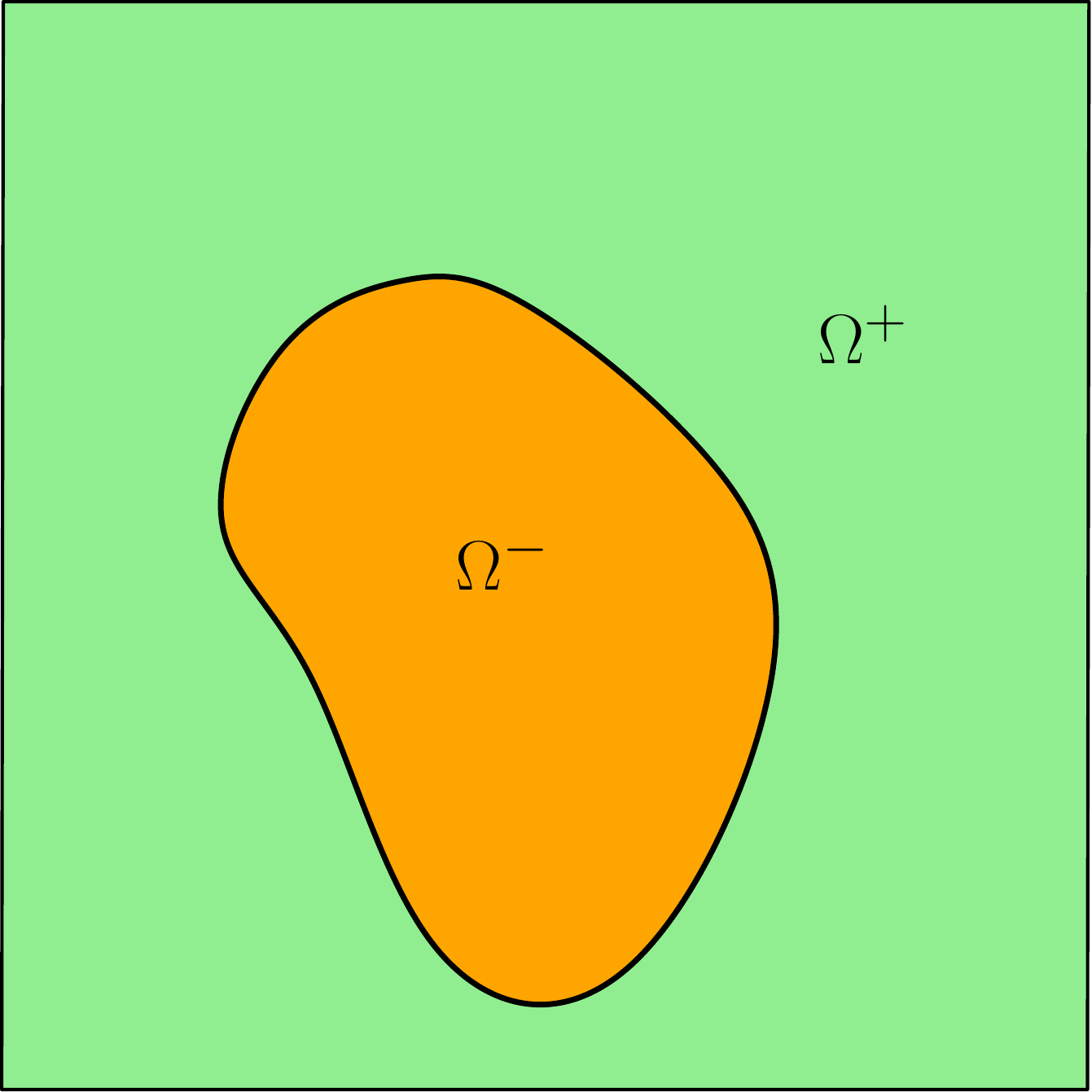}\quad
\includegraphics[width=0.3\textwidth]{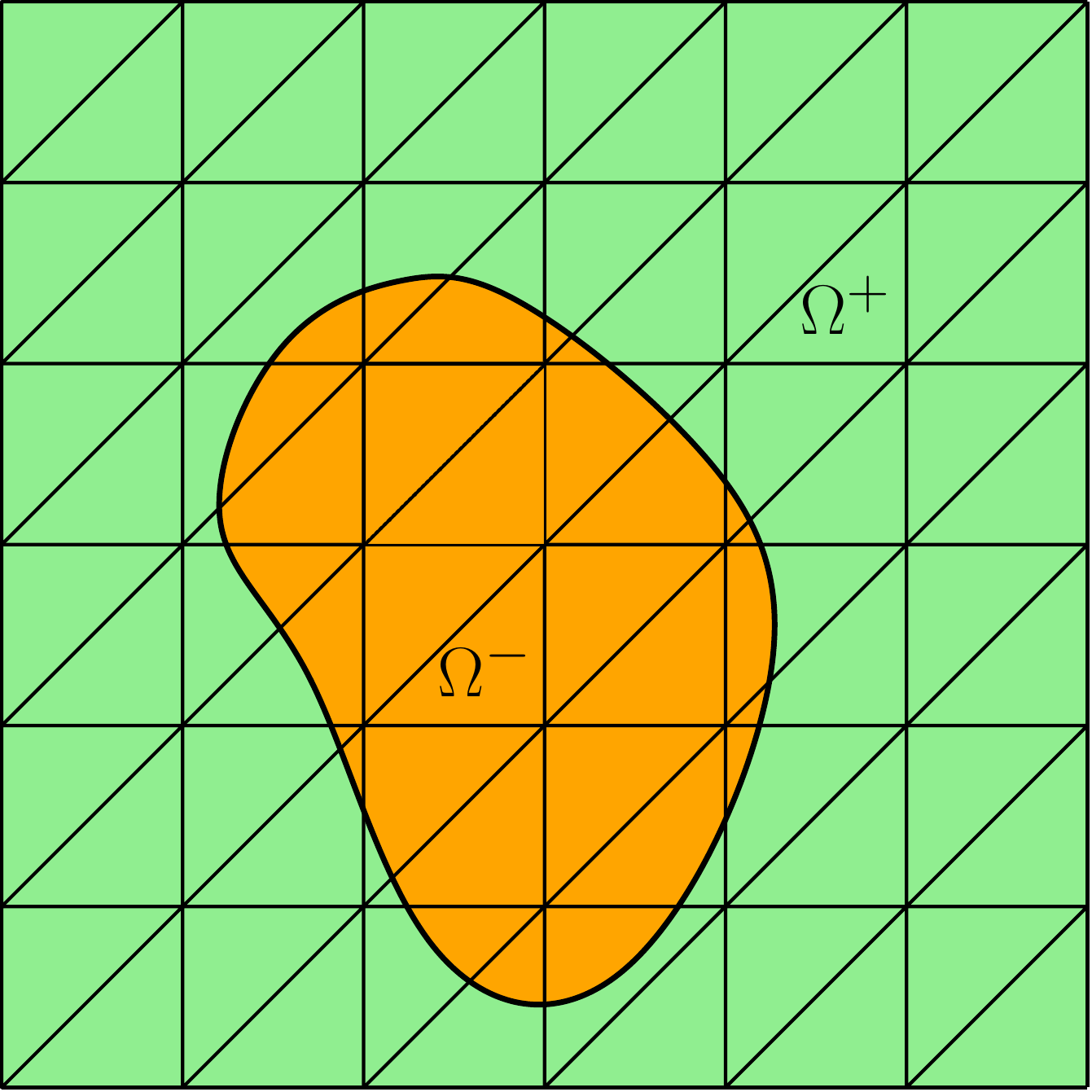}
\end{center}
\caption{Plots of interface $\Gamma$ and a Cartesian triangular mesh.}\label{fig:mesh}
\end{figure}

\begin{figure}[!tb]
\begin{center}
\includegraphics[width=0.3\textwidth]{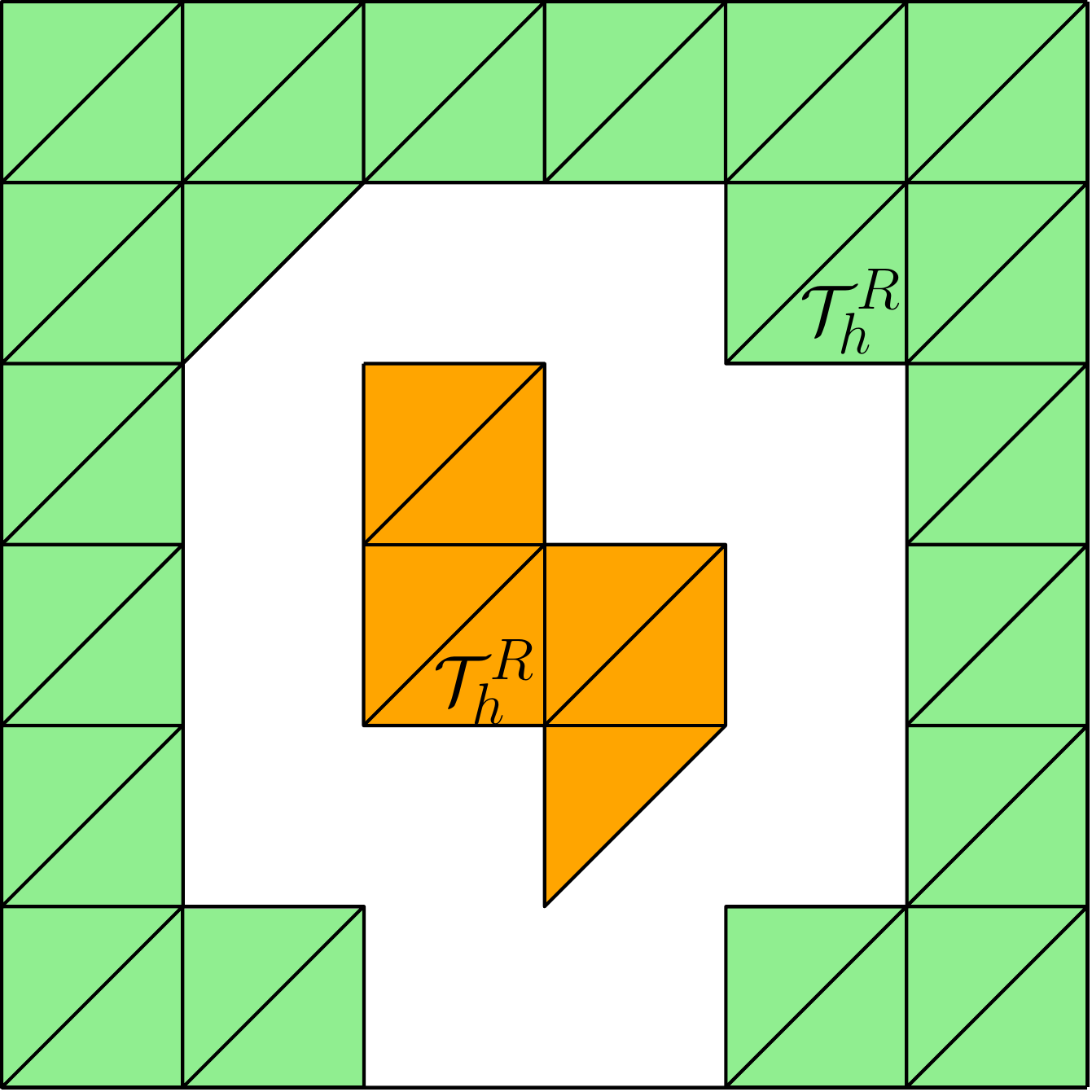}\quad
\includegraphics[width=0.3\textwidth]{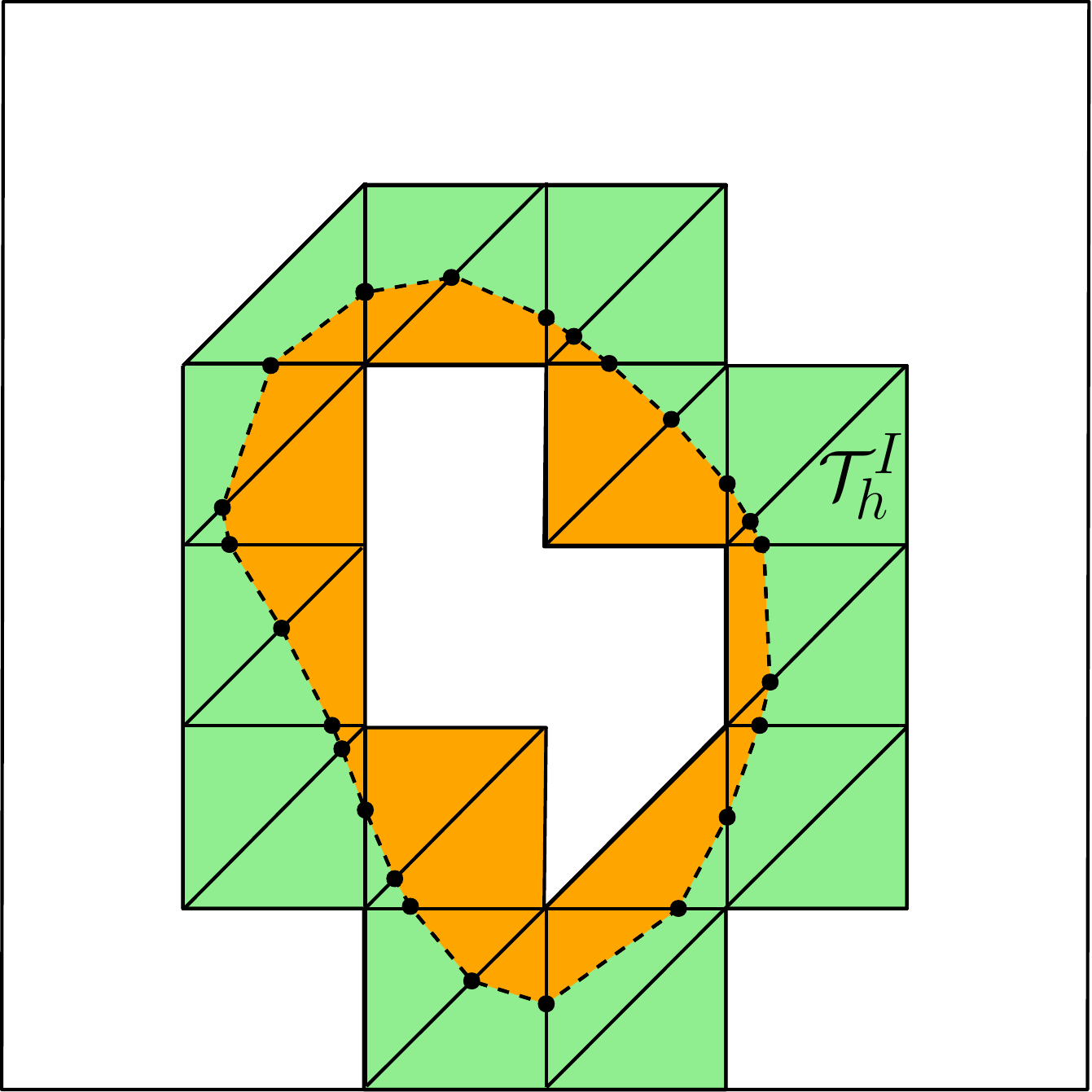}
\end{center}
\caption{Plots of regular elements $\mathcal{T}_h^R$ and interface elements $\mathcal{T}_h^I$.}\label{fig:ReAndIre}
\end{figure}

Without of generality, we assume that $\mathcal{T}_h$ satisfies the following hypotheses, when the mesh size $h$ is small enough:
\begin{itemize}
\item[(H1).] The interface $\Gamma$ cannot intersect an edge of any element at more than two points unless the edge is part of $\Gamma.$
\item[(H2).]If $\Gamma$ intersects the boundary of an element at two points, these intersection points must be on different edges of this element.
\item[(H3).]The interface $\Gamma$ is a piecewise $C^2$- function, and the mesh $\mathcal{T}_h$ is formed such that the subset of $\Gamma$ in every interface element $T\in \mathcal{T}_h^I$ is $C^2$-continuous.
\item[(H4).]When the mesh size $h$ is small enough, the number of interface elements is of order $O(h^{-1})$.
\end{itemize}


To be self-contained, we briefly recall the linear IFE space introduced in \cite{2004LiLinLinRogers, 2003LiLinWu}. Let $T\in\mathcal{T}_h^I$ be an interface element. Denote the three vertices of $T$ by $A_1$, $A_2$, and $A_3$. The interface curve $\Gamma$ cut the element $T$ at two intersection points $D$, $E$. The line segment $\overline{DE}$ divide the element $T$ into two sub-elements $T^-$ and $T^+$. See Figure \ref{fig: interface element} for a typical interface triangle.

\begin{figure}[htb]
\begin{center}
\begin{tikzpicture}[scale=0.8]
\draw (0,0) node[below left] {${A}_1$} --
(5,0.5) node[below right] {${A}_2$} --
(2,5) node[above right] {${A}_3$} -- cycle;
\draw (1,2.5)node [left] {${D}$}--(4,2) node[right] {${E}$};
\draw[red] (1,2.5) to [out=-55,in=-120](2.2,2.3) to [out=60,in=150] (4,2); 
\node at (2,1) {$T^-$};
\node at (2.3,3.4) {$T^+$};
\begin{scope}[>=latex]
\draw[->] (3,1.3) -- (2,2.1) node[at start,right] {$\Gamma$};
\end{scope}
\end{tikzpicture}
\caption{A typical triangular interface element.} \label{fig: interface element}
\end{center}
\end{figure}
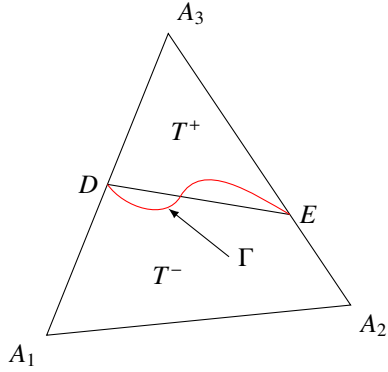

The linear IFE functions are constructed by incorporating the interface jump conditions.
Specifically, three linear IFE shape functions $\phi_i, i = 1, 2, 3$ associated with the vertices of $A_i, i = 1, 2, 3$ are constructed in the form of
\begin{equation}\label{eq: linear IFE function}
    \phi_{i}(x,y) =
    \left\{
      \begin{array}{cc}
        \phi_{i}^{+}(x,y) = a_i^+ + b_i^+x + c_i^+y,~~~~  &\text{if}~ (x,y)\in T^+, \vspace{1mm}\\
        \phi_{i}^{-}(x,y) = a_i^- + b_i^-x + c_i^-y,~~~~  &\text{if}~ (x,y)\in T^-, \vspace{1mm}\\
      \end{array}
    \right.
\end{equation}
satisfying the following conditions:
\begin{itemize}
  \item nodal value condition
  \begin{equation}\label{eq: nodal condition}
    \phi_{i}(A_j) = \delta_{ij},~~~~~i,j = 1,2,3.
\end{equation}
  \item continuity of the function
  \begin{equation}\label{eq: continuity1}
    \jump{\phi_{i}{(D)}} =0, ~~~~\jump{\phi_{i}{(E)}} = 0.
\end{equation}
  \item continuity of normal component of flux
  \begin{equation}\label{eq: continuity2}
    \jump{\beta\frac{\partial \phi_{i}}{\partial n}} = 0.
  \end{equation}
\end{itemize}
It has been shown \cite{2004LiLinLinRogers} that conditions specified in \eqref{eq: nodal condition} - \eqref{eq: continuity2} can uniquely determine these shape functions in \eqref{eq: linear IFE function}. Then, on each interface element $T \in \mathcal{T}_h^I$, we define the local IFE space
\begin{eqnarray}\label{eq: P1 IFE space}
\tilde P_1(T) = span\{\phi_1,\phi_2,\phi_3\}.
\end{eqnarray}

\subsection{Weak Functions}

The weak Galerkin method takes finite element functions in the form of two components, one in the interior and the other on the boundary. This means for a weak function $v$ defined on an element $T$,  
\begin{eqnarray*}
v=\begin{cases}
v_0,&\mbox{ in }T,\\
v_b,&\mbox{ on }\partial T.
\end{cases}
\end{eqnarray*}
For simplicity, we shall write $v$ as $v=\{v_0,v_b\}$ in short. 

We consider the following weak Galerkin finite element space
\begin{eqnarray*}
V_h:=\Big\{v=\{v_0,v_b\}:~v_0|_T\in P_1(T),\mbox{ if }T\in\mathcal{T}_h^R,v_0|_T\in \tilde{P}_1(T),\mbox{ if }T\in\mathcal{T}_h^I; ~v_b|_e\in P_0(e), e\subset\mathcal{E}_h\Big\}.
\end{eqnarray*}
Here ${P}_1(T)$ is the standard linear polynomial space, and $\tilde{P}_1(T)$ is the linear immersed finite element space on $T$ defined in \eqref{eq: P1 IFE space}. The $P_0(e)$ is the standard piecewise constant function on the edge $e$. 
Let $V_h^0$ be the subspace of $V_h$ consisting of finite element functions with vanishing boundary value:
\begin{eqnarray*}
V_h^0=\{v\in V_h: v_b=0 \mbox{ on }\partial\Omega\}.
\end{eqnarray*}


On each element $T\in\mathcal{T}_h$, define the projection operator $Q_h$ by
\begin{eqnarray*}
Q_hu=\{Q_0u,Q_bu\}\in V_h,
\end{eqnarray*}
where $Q_0$ is the Lagrange interpolation $C(T)$ to $P_1(T)$ or $\tilde P_1(T)$, depending on whether $T$ is a regular element or an interface element, and $Q_b$ is the $L^2$ projection from $L^2(e)$ to $P_0(e)$ for every edge $e$.

The immersed weak Galerkin method for the problem (\ref{eq:pde})-(\ref{eq:interface2}) is to seek: $u_h=\{u_{h0},u_{hb}\}\in V_h$ such that
\begin{eqnarray}\label{eq:scheme}
A(u_h,v)=(f,v_0),\ \forall v\in V_h^0,
\end{eqnarray}
where the bilinear form $A(u,v)$ is defined as
\begin{eqnarray}
A(u,v)&=&\sum_{T\in\mathcal{T}_h}\bigg( (\beta\nabla u_0,\nabla v_0)_T-\langle Q_b(\beta\nabla u_0\cdot\bn),v_0-v_b\rangle_\pT\notag\\
&&-\langle Q_b(\beta\nabla v_0\cdot\bn),u_0-u_b\rangle_\pT+\rho h^{-1}\langle Q_bu_0-u_b,Q_b v_0-v_b\rangle_\pT\bigg), \label{eq:blinear}
\end{eqnarray}
where $\rho$ is a positive constant.
\begin{remark}
On every regular element $T\in\mathcal{T}_h^R$ and $e\subset\pT$, we have $Q_b(\beta\nabla\phi_0\cdot\bn)=\beta\nabla\phi_0\cdot\bn$
simply because $\beta\nabla\phi_0\cdot\mathbf{n}$ is a constant .  
\end{remark}

\section{Well-posedness of Numerical Algorithm}
In this section, we present the existence and uniqueness of the proposed immersed weak Galerkin method.

\begin{lemma}
The following inequality holds on every element $T\in\mathcal{T}_h$
\begin{equation}\label{eq: insertQbv0}
\|v_0-v_b\|_\pT^2\le h\|\nabla v_0\|_T^2+\|Q_bv_0-v_b\|_\pT^2,~~~\forall v\in V_h.
\end{equation}
\end{lemma}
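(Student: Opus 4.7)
The plan is to split $v_0 - v_b$ on each edge by inserting the $L^2$-projection $Q_b v_0$, exploit the edgewise orthogonality of the projection to obtain an exact Pythagorean identity, and then estimate the resulting oscillation term by a Poincar\'e/trace argument.

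First I would observe that on any edge $e \subset \pT$ the quantities $Q_b v_0$ and $v_b$ are both constants, so $Q_b v_0 - v_b \in P_0(e)$, while $v_0 - Q_b v_0$ is $L^2(e)$-orthogonal to $P_0(e)$ by the definition of $Q_b$. Hence $\langle v_0 - Q_b v_0,\, Q_b v_0 - v_b\rangle_e = 0$. Writing $v_0 - v_b = (v_0 - Q_b v_0) + (Q_b v_0 - v_b)$, squaring, and summing over the three edges of $T$ gives the exact Pythagorean splitting
\begin{equation*}
\|v_0 - v_b\|_\pT^2 = \|v_0 - Q_b v_0\|_\pT^2 + \|Q_b v_0 - v_b\|_\pT^2.
\end{equation*}

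It therefore remains to control the oscillation term by $h \|\nabla v_0\|_T^2$ (up to constants that can be tracked through the definition of $h$). On each edge $e$, $v_0 - Q_b v_0$ has vanishing mean, and $v_0|_e$ lies in $H^1(e)$: for regular elements $v_0$ is affine on $e$, and for interface elements the built-in continuity of the IFE basis at the crossing points $D$ and $E$ makes $v_0|_e$ a continuous piecewise affine function on the edge. A one-dimensional Poincar\'e inequality then yields $\|v_0 - Q_b v_0\|_e^2 \le C h^2 \|\nabla v_0\|_e^2$, and applying a standard trace inequality piecewise on $T^+$ and $T^-$ gives $\|\nabla v_0\|_\pT^2 \le C h^{-1} \|\nabla v_0\|_T^2$. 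Combining these two estimates completes the argument.

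The main obstacle is keeping the trace constant bounded uniformly in the position of $\Gamma$, because one of $T^\pm$ can degenerate into an arbitrarily thin sliver and a naive scaling estimate would blow up. The remedy is a structural feature of the $P_1$ immersed space: the continuity conditions \eqref{eq: continuity1} together with the flux jump condition \eqref{eq: continuity2} tie the two pieces together, and a short linear-algebra calculation shows that $|\nabla v_0^+|$ and $|\nabla v_0^-|$ are equivalent with a constant depending only on $\beta^+/\beta^-$ and not on the cut geometry. This robust IFE fact is what ensures $\|\nabla v_0\|_\pT^2 \lesssim h^{-1} \|\nabla v_0\|_T^2$ survives on sliver sub-elements and closes the proof.
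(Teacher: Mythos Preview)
Your Pythagorean splitting is exactly right and is essentially what the paper uses (it writes the first step as an inequality and calls it ``triangular inequality,'' but it is in fact the orthogonality you identify). The difference comes in how the oscillation term $\|v_0-Q_bv_0\|_\pT$ is controlled.

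You pass through an edge Poincar\'e inequality and then need a trace bound for $\nabla v_0$ on $\pT$, which forces you to confront the sliver sub-element issue and invoke the IFE gradient equivalence $|\nabla v_0^+|\sim|\nabla v_0^-|$. That argument is correct, but it is more machinery than required. The paper's route is shorter: it simply observes that an IFE function $v_0$ is globally in $H^1(T)$ (continuity across $\overline{DE}$ is built into the space), so the standard trace and Poincar\'e inequalities apply on the \emph{whole} triangle $T$ with constants depending only on its shape regularity, not on the interface location. Concretely, $\|v_0-Q_bv_0\|_\pT\le\|v_0-\bar v_0\|_\pT$ (best approximation by the element mean $\bar v_0$), and then $\|v_0-\bar v_0\|_\pT^2\lesssim h^{-1}\|v_0-\bar v_0\|_T^2+h\|\nabla v_0\|_T^2\lesssim h\|\nabla v_0\|_T^2$. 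No piecewise trace, no sliver analysis, no gradient equivalence.

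So your proof is valid, and your discussion of the sliver obstacle and its resolution via the IFE structural equivalence is accurate and instructive; but the paper avoids that detour entirely by working with $v_0$ itself (which is $H^1$ on $T$) rather than with $\nabla v_0$ (which is not).
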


\begin{proof}
We note that the inequality \eqref{eq: insertQbv0} is a standard estimate for $T\in \mathcal{T}^R_h$.  On an interface element $T\in \mathcal{T}^I_h$, we note that $v_0\in H^1(T)$. Therefore, applying the triangular inequality and trace inequality yields
\begin{equation*}
\|v_0-v_b\|_{\pT}^2\le\|v_0-Q_bv_0\|_{\pT}^2+\|Q_bv_0-v_b\|_{\pT}^2
\le h\|\nabla v_0\|_T^2+\|Q_bv_0-v_b\|_{\pT}^2.
\end{equation*}
\end{proof}

\begin{lemma}
For all $v\in V_h$, $T\in\mathcal{T}_h$, and $e\subset\pT$, the following inequality holds,
\begin{eqnarray}
\|Q_b v\|_e\le \|v\|_e~~~\forall v\in V_h.\label{eq:Qbv}
\end{eqnarray}
\end{lemma}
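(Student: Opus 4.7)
The plan is to exploit the fact that $Q_b$ is the $L^2$-orthogonal projection from $L^2(e)$ onto the constants $P_0(e)$, so it must be a contraction in the $L^2$-norm. The inequality is simply the standard statement that orthogonal projections are non-expansive, and nothing about $V_h$ or the interface is needed beyond the fact that the trace $v|_e$ (which equals $v_0|_e$ in the notation $v=\{v_0,v_b\}$) lies in $L^2(e)$.

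Concretely, I would first recall the defining property of the $L^2$ projection: for every $q\in P_0(e)$,
\begin{equation*}
(v - Q_b v,\, q)_e = 0.
\end{equation*}
Since $Q_b v\in P_0(e)$, choosing $q=Q_b v$ yields $(v,Q_b v)_e = (Q_b v,Q_b v)_e = \|Q_b v\|_e^2$. Then a single application of the Cauchy--Schwarz inequality gives
\begin{equation*}
\|Q_b v\|_e^2 = (v, Q_b v)_e \le \|v\|_e\, \|Q_b v\|_e,
\end{equation*}
and dividing by $\|Q_b v\|_e$ (the inequality being trivial if this quantity vanishes) delivers the desired bound.

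An equally short alternative I would mention is the Pythagorean identity for orthogonal projections, which states directly
\begin{equation*}
\|v\|_e^2 = \|Q_b v\|_e^2 + \|v - Q_b v\|_e^2 \ge \|Q_b v\|_e^2.
\end{equation*}
There is no real obstacle here; the only mild subtlety is clarifying that in the notation $v=\{v_0,v_b\}$ the symbol $\|v\|_e$ on the right-hand side refers to the $L^2(e)$-norm of the interior trace $v_0|_e$ (to which $Q_b$ is applied), after which the argument is a one-line consequence of the projection property.
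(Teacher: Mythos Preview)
Your argument is correct and is essentially identical to the paper's own proof: the paper writes $\|Q_b v\|_e^2=\langle Q_b v,Q_b v\rangle_e=\langle v,Q_b v\rangle_e\le\|v\|_e\|Q_b v\|_e$, which is exactly your Cauchy--Schwarz route. Your additional Pythagorean remark and the clarification about the meaning of $\|v\|_e$ are fine but not needed for the comparison.
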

\begin{proof}
By the definition of $Q_b$ and Cauchy-Schwartz inequality, we obtain
\begin{eqnarray*}
\|Q_b v\|_e^2=\langle Q_b v,Q_b v\rangle_e=\langle v,Q_b v\rangle_e\le \|v\|_e\|Q_bv\|_e.
\end{eqnarray*}
\end{proof}

\begin{theorem}
The immersed weak Galerkin method (\ref{eq:scheme}) has a unique solution provided that $\rho$ is big enough.
\end{theorem}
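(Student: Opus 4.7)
The strategy is the standard weak Galerkin coercivity argument, adapted to handle the immersed basis on interface elements. Since \eqref{eq:scheme} is a square finite-dimensional linear system once the boundary datum is lifted into $V_h$, it suffices to establish that $A(\cdot,\cdot)$ is coercive on $V_h^0$ with respect to a discrete energy seminorm that is in fact a norm; this gives uniqueness, and existence then follows from finite dimensionality. The natural candidate is
\[
\3bar v \3bar^{\,2} := \sum_{T\in\mathcal{T}_h}\Bigl(\|\sqrt{\beta}\,\nabla v_0\|_T^2 + h^{-1}\|Q_b v_0-v_b\|_{\partial T}^2\Bigr).
\]

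First I would set $v=u_h$ in \eqref{eq:blinear} to get
\[
A(u_h,u_h)=\sum_T\|\sqrt{\beta}\,\nabla u_0\|_T^2 \;-\;2\sum_T\langle Q_b(\beta\nabla u_0\cdot\bn),u_0-u_b\rangle_{\partial T}\;+\;\rho h^{-1}\sum_T\|Q_b u_0-u_b\|_{\partial T}^2,
\]
and then exploit the key algebraic observation that $Q_b(\beta\nabla u_0\cdot\bn)\in P_0(e)$ on each edge $e\subset\partial T$, so by the $L^2$-orthogonality defining $Q_b$,
\[
\langle Q_b(\beta\nabla u_0\cdot\bn),u_0-u_b\rangle_e=\langle Q_b(\beta\nabla u_0\cdot\bn),Q_bu_0-u_b\rangle_e.
\]
This rewrites the cross term entirely in the stabilization quantity $Q_b u_0-u_b$ and puts it on the same footing as the penalty.

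Next I would bound the cross term by Cauchy--Schwarz, the projection inequality \eqref{eq:Qbv}, and a trace-type estimate of the form
\[
\|\beta\nabla u_0\cdot\bn\|_{\partial T}\;\le\; C\beta_{\max}h^{-1/2}\|\nabla u_0\|_T,
\]
which is standard on $T\in\mathcal{T}_h^R$ and, crucially, must also hold on $T\in\mathcal{T}_h^I$ for the IFE basis in $\tilde P_1(T)$ with a constant independent of how $\Gamma$ cuts $T$. Young's inequality then yields
\[
2\bigl|\langle Q_b(\beta\nabla u_0\cdot\bn),Q_bu_0-u_b\rangle_{\partial T}\bigr|\;\le\; \tfrac{1}{2}\beta_{\min}\|\nabla u_0\|_T^2 \;+\; \frac{2C^2\beta_{\max}^2}{\beta_{\min}}\,h^{-1}\|Q_bu_0-u_b\|_{\partial T}^2,
\]
so summing over $T$ and choosing $\rho>2C^2\beta_{\max}^2/\beta_{\min}$ produces the coercivity bound $A(u_h,u_h)\ge C_0\,\3bar u_h\3bar^{\,2}$.

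To promote $\3bar\cdot\3bar$ to a genuine norm on $V_h^0$, I would argue that $\3bar u_h\3bar=0$ forces $\nabla u_{h0}=0$ on every element (on interface elements the IFE continuity conditions \eqref{eq: continuity1} at the points $D,E$ glue the two piecewise constants on $T^\pm$ into a single constant on $T$), while $Q_bu_{h0}=u_{hb}$ propagates that constant across shared interior edges into a single global constant, which must vanish because $u_{hb}=0$ on $\partial\Omega$. The step I expect to require the most care is the trace inequality on interface elements: it relies on the explicit structure of the basis \eqref{eq: linear IFE function}--\eqref{eq: continuity2} and must be uniform in the interface geometry within $T$; everything else is a routine WG coercivity calculation followed by a finite-dimensional injective-implies-surjective argument.
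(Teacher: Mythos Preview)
Your proposal is correct and follows essentially the same coercivity route as the paper: the replacement $\langle Q_b(\beta\nabla v_0\cdot\bn),v_0-v_b\rangle_e=\langle Q_b(\beta\nabla v_0\cdot\bn),Q_bv_0-v_b\rangle_e$ via the $L^2$-orthogonality of $Q_b$, Cauchy--Schwarz together with the IFE trace estimate \eqref{eq:IFE_flux}, and Young's inequality to absorb the cross term into the volume and stabilization terms for $\rho$ large. The paper additionally records the continuity bound $A(w,v)\le C\3bar w\3bar\,\3bar v\3bar$ but, as you observe, in the square finite-dimensional setting coercivity alone already delivers uniqueness; conversely your explicit argument that $\3bar\cdot\3bar$ is a genuine norm on $V_h^0$ (using the IFE continuity at $D,E$ and the propagation through shared edges) is a step the paper leaves implicit.
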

\begin{proof} We show this well-posedness result by proving the continuity and coercivity of the bilinear form. For the continuity, we have  \begin{eqnarray*}
A(w,v)&=&\sum_{T\in\mathcal{T}_h^R\cup\mathcal{T}_h^I}\bigg( (\beta\nabla w_0,\nabla v_0)_T-\langle Q_b(\beta\nabla w_0\cdot\bn),v_0-v_b\rangle_\pT\notag\\
&&-\langle Q_b(\beta\nabla v_0\cdot\bn),w_0-w_b\rangle_\pT+h^{-1}\rho\langle Q_bw_0-w_b,Q_b v_0-v_b\rangle_\pT\bigg)\\
&=&\sum_{T\in\mathcal{T}_h^R\cup\mathcal{T}_h^I}\bigg( (\beta\nabla w_0,\nabla v_0)_T-\langle Q_b(\beta\nabla w_0\cdot\bn),Q_bv_0-v_b\rangle_\pT\notag\\
&&-\langle Q_b(\beta\nabla v_0\cdot\bn),Q_bw_0-w_b\rangle_\pT+h^{-1}\rho\langle Q_bw_0-w_b,Q_b v_0-v_b\rangle_\pT\bigg)\\
&\le&\sum_T\bigg(\|\beta^{1/2}\nabla w_0\|_T\|\beta^{1/2}\nabla v_0\|_T+(h\|\beta^{1/2}\nabla w_0\cdot\bn\|_\pT^2)^{1/2}(\beta h^{-1}\|Q_bv_0-v_b\|_e^2)^{1/2}\\
&&\quad+(h\|\beta^{1/2}\nabla v_0\cdot\bn\|_\pT^2)^{1/2}(\beta h^{-1}\|Q_bw_0-w_b\|_e^2)^{1/2}
+(h^{-1}\rho\|Q_bv_0-v_b\|_\pT^2)^{1/2}(h^{-1}\rho\|Q_bw_0-w_b\|_\pT^2)^{1/2}\bigg)\\
&\le&\sum_T \bigg(\|\beta^{1/2}\nabla w_0\|_T\|\beta^{1/2}\nabla v_0\|_T+\|\beta^{1/2}\nabla w_0\|_T(\beta h^{-1}\|Q_bv_0-v_b\|_e^2)^{1/2}
\\
&&+ \|\beta^{1/2}\nabla v_0\|_T(\beta h^{-1}\|Q_bw_0-w_b\|_e^2)^{1/2}+(h^{-1}\rho\|Q_bv_0-v_b\|_\pT^2)^{1/2}(h^{-1}\rho\|Q_bw_0-w_b\|_\pT^2)^{1/2}\bigg)\\
&\le&C\3bar w\3bar~ \3bar v\3bar.
\end{eqnarray*}

Then, we show the coercivity of the bilinear form. Note that
\begin{equation}\label{eq:Avv}
A(v,v)=\sum_{T\in\mathcal{T}_h} \bigg(\| \beta^{1/2}\nabla v_0\|_T^2-2\langle Q_b(\beta\nabla v_0\cdot\bn),v_0-v_b \rangle_{\pT}+\rho h^{-1}\|Q_bv_0-v_b\|_{\pT}^2\bigg).
\end{equation}

We have for $T\in\mathcal{T}_h^R\cup\mathcal{T}_h^I$
\begin{eqnarray*}
2\left\langle Q_b(\beta\nabla v_0\cdot\bn),v_0-v_b \right\rangle_{\pT}
&=&2\left\langle Q_b(\beta\nabla v_0\cdot\bn),Q_bv_0-v_b \right\rangle_{\pT}\\
&\le& 2\left(h\|Q_b(\beta^{1/2}\nabla v_0\cdot\bn)\|_{\pT}^2\right)^{1/2}\left(\beta h^{-1}\|Q_bv_0-v_b\|_{\pT}^2\right)^{1/2}\\
&\le& 2\left(\frac{h\|\beta^{1/2}\nabla v_0\cdot\bn\|_{\pT}^2}{2\epsilon}\right)+2\left(\frac{\beta\epsilon h^{-1}\|Q_bv_0-v_b\|_{\pT}^2}{2}\right)\\
&=& \frac{h\|\beta^{1/2}\nabla v_0\cdot\bn\|_{\pT}^2}{\epsilon}+\epsilon \beta h^{-1}\|Q_bv_0-v_b\|_{\pT}^2\\
&\le& (1/\epsilon)\|\beta^{1/2}\nabla v_0\|_T^2+\epsilon\beta h^{-1}\|Q_bv_0-v_b\|_{\pT}^2.
\end{eqnarray*}

Substituting the above inequality into (\ref{eq:Avv}), we obtain
\begin{eqnarray*}
A(v,v)\ge \sum_{T\in\mathcal{T}_h}(1-1/\epsilon)\|\beta^{1/2}\nabla v_0\|_T^2+(\rho-\epsilon\beta_{\max})h^{-1}\|Q_bv_0-v_b\|_{\pT}^2.
\end{eqnarray*}

Choosing $\epsilon=2$ and $\rho>2\beta_{\max}$ completes the proof of the coercivity. 
\end{proof}

\section{Error Analysis}
In this section, we derive the a priori error estimate for the immersed weak Galerkin method \eqref{eq:scheme}. We define the energy norm by
\begin{equation*}
\3bar v\3bar^2=\sum_{T\in\mathcal{T}_h^R\cup\mathcal{T}_h^I}\bigg(\|\beta^{1/2}\nabla v_0\|_T^2+ \rho h^{-1}\|Q_bv_0-v_b \|_{\partial T}^2\bigg).
\end{equation*}

First, we recall some trace inequalities on regular elements and interface elements. Let $T\in\mathcal{T}_h^R$ be a regular element and $e$ be an edge of $T$. The standard trace inequality holds for every function $v\in H^1(T)$: 
\begin{eqnarray}
\|v\|_e^2\le C\left(h_T^{-1}\|v\|_{0,T}^2+h_T\|\nabla v\|_{0,T}^2\right).
\end{eqnarray}
If $T\in\mathcal{T}_h^I$ is an interface element, the following lemma provides the trace inequalities of IFE functions \cite{2015LinLinZhang}.
\begin{lemma}
There exists a constant $C$ independent of the interface location such that for every linear IFE function $v\in \tilde{P}_1(T)$ the following inequalities hold:
\begin{eqnarray}
\|\beta v_p\|_{0,e}&\le& Ch^{1/2}|T|^{-1/2}\|\sqrt{\beta}\nabla v\|_{0,T},\ p=x,y\\
\|\beta\nabla v\cdot\bn_e\|_{0,e}&\le& Ch^{1/2}|T|^{-1/2}\|\sqrt{\beta}\nabla v\|_{0,T}.\label{eq:IFE_flux}
\end{eqnarray}
\end{lemma}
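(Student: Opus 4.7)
The key observation is that on an interface element $T$, an IFE function $v$ is piecewise linear so $\nabla v^{\pm}$ are constant vectors on $T^{\pm}$, while $\beta v_{p}$ (and $\beta\nabla v\cdot\bn_{e}$) are piecewise constants on the boundary edges split by $\Gamma$. My plan is to introduce the chord $\overline{DE}$ that separates $T^{+}$ from $T^{-}$, work in the orthonormal frame $(\bar\bn,\bar\bt)$ adapted to that chord, and reduce everything to a bound on the two scalars that parametrize $(\nabla v^{+},\nabla v^{-})$.

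First I would exploit the IFE constraints \eqref{eq: continuity1}--\eqref{eq: continuity2}. Since $v^{+}-v^{-}$ is linear and vanishes at $D$ and $E$, it vanishes identically on $\overline{DE}$, so the tangential component along the chord is common to both sides; write $b:=\nabla v^{+}\cdot\bar\bt=\nabla v^{-}\cdot\bar\bt$. The weak-flux condition gives $\beta^{+}a^{+}=\beta^{-}a^{-}$ with $a^{\pm}:=\nabla v^{\pm}\cdot\bar\bn$. Thus $\nabla v^{\pm}=a^{\pm}\bar\bn+b\,\bar\bt$ is completely described by the three scalars $a^{+},a^{-},b$, linked by the flux relation.

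Next I would split the edge $e$ into at most two pieces $e^{+},e^{-}$ (each of length $\le h$) and write
\begin{equation*}
\|\beta v_{p}\|_{0,e}^{2}=(\beta^{+}v_{p}^{+})^{2}|e^{+}|+(\beta^{-}v_{p}^{-})^{2}|e^{-}|
\le h\Bigl((\beta^{+}v_{p}^{+})^{2}+(\beta^{-}v_{p}^{-})^{2}\Bigr).
\end{equation*}
Using $|\beta^{\pm}v_{p}^{\pm}|\le|\beta^{\pm}\nabla v^{\pm}|$ and the decomposition above, it suffices to bound $(\beta^{+}a^{+})^{2}$, $(\beta^{-}a^{-})^{2}$, $(\beta^{+}b)^{2}$, $(\beta^{-}b)^{2}$ by $C|T|^{-1}\|\sqrt{\beta}\nabla v\|_{0,T}^{2}$. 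Since one of $|T^{+}|,|T^{-}|$ is $\ge |T|/2$, I would estimate the relevant gradient directly on the larger subelement (e.g.\ $\beta^{+}|\nabla v^{+}|^{2}|T^{+}|\le\|\sqrt{\beta}\nabla v\|_{0,T}^{2}$ yields $\beta^{+}b^{2}\le 2|T|^{-1}\|\sqrt\beta\nabla v\|_{0,T}^{2}$), and then transfer to the smaller subelement via the two continuity identities $\beta^{+}a^{+}=\beta^{-}a^{-}$ and $b^{+}=b^{-}$. The second inequality \eqref{eq:IFE_flux} is handled identically by replacing the coordinate direction with the outward unit normal $\bn_{e}$.

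The main obstacle is precisely the case in which one of the subelements $T^{\pm}$ degenerates (arbitrarily small as the interface grazes a vertex), so that a naive $\|\cdot\|_{0,T^{\pm}}$ bound on the degenerate side blows up like $1/|T^{\pm}|$. The IFE constraints are what save us: they let every quantity on the tiny subelement be controlled by its counterpart on the fat subelement, at the cost of a constant depending on $\beta^{\pm}$ but independent of the interface position, which is exactly the independence claimed in the lemma.
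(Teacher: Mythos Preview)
The paper does not actually prove this lemma; it merely cites it from \cite{2015LinLinZhang} (Lin, Lin, Zhang, \emph{SIAM J.\ Numer.\ Anal.}\ 2015). So there is no in-paper proof to compare against. That said, your proposal is a correct and essentially standard argument, and it matches the strategy in the cited reference: parametrize $\nabla v^{\pm}$ in the chord-adapted frame, use the IFE constraints $\nabla v^{+}\cdot\bar\bt=\nabla v^{-}\cdot\bar\bt$ and $\beta^{+}\nabla v^{+}\cdot\bar\bn=\beta^{-}\nabla v^{-}\cdot\bar\bn$, anchor the estimate on the larger subelement $|T^{s}|\ge|T|/2$, and transfer to the other side through those two identities. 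The resulting constant depends on the ratio $\beta_{\max}/\beta_{\min}$ but not on where $\overline{DE}$ sits, which is exactly the independence claimed.

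Two minor remarks for when you write this out in full. First, in your edge splitting you should state that $|e^{+}|+|e^{-}|=|e|\le h$, so the inequality $\|\beta v_{p}\|_{0,e}^{2}\le h\bigl((\beta^{+}v_{p}^{+})^{2}+(\beta^{-}v_{p}^{-})^{2}\bigr)$ follows with no extra factor; your phrasing ``each of length $\le h$'' is slightly looser than needed. Second, make explicit that the IFE shape functions are defined relative to the straight chord $\overline{DE}$ (not the curved arc of $\Gamma$), so $T^{+}$ and $T^{-}$ are genuine polygons with $|T^{+}|+|T^{-}|=|T|$; this is what justifies the ``one half is at least $|T|/2$'' step and keeps the argument completely independent of the curvature of $\Gamma$.
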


The next two lemmas provide the interpolation error estimates for linear IFE spaces  \cite{2004LiLinLinRogers, 2015LinLinZhang}.

\begin{lemma}
Let $T\in\mathcal{T}_h^I$ be an interface element. There exists a constant $C$, independent of interface location, such that the interpolation $I_hu$ in the IFE space $\tilde P_1(T)$ has the following error bound:
\begin{eqnarray}\label{eq: int error elem}
\|u-I_hu\|_{0,T}+h\|u-I_hu\|_{1,T}\le Ch^2\|u\|_{\tilde{H}^2(T)},~~~\forall u\in \tilde{H}^2(T).
\end{eqnarray}
\end{lemma}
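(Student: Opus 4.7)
The plan is to follow the standard pattern for linear IFE interpolation error estimates, as in the cited references: dissect $T$ along both the true and the chordal interfaces, use $H^2$-extensions of $u^\pm$ across $\Gamma$, and exploit the jump conditions built into the IFE shape functions.

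First, I would set up the geometry. Partition $T$ into the curvilinear pieces $T^\pm = T\cap\Omega^\pm$ separated by $\Gamma$, and into the chordal pieces $T^\pm_{\mathrm{lin}}$ separated by the line segment $\overline{DE}$. Under hypothesis (H3), $\Gamma\cap T$ is $C^2$, so $\Gamma$ departs from $\overline{DE}$ by at most $O(h^2)$, which yields
\begin{equation*}
|T^\pm \triangle T^\pm_{\mathrm{lin}}| \le C h^3.
\end{equation*}
This geometric bookkeeping lets me freely convert between the two dissections, with the sliver regions contributing only higher-order terms.

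Second, I would invoke a Stein-type extension of $u^\pm\in H^2(T^\pm)$ to $\tilde u^\pm\in H^2(T)$ with $\|\tilde u^\pm\|_{2,T} \le C\|u^\pm\|_{2,T^\pm}$. On each chordal subregion $T^\pm_{\mathrm{lin}}$, let $L_h \tilde u^\pm$ denote the standard linear Lagrange interpolant. Piece these together to form an auxiliary reference interpolant $J_h u$, equal to $L_h \tilde u^+$ on $T^+_{\mathrm{lin}}$ and to $L_h \tilde u^-$ on $T^-_{\mathrm{lin}}$. Because both $J_h u$ and $I_h u$ reproduce the nodal values $u(A_i)$, the difference $I_h u - J_h u$ is an IFE function with zero nodal data; expanding it in the basis of ``interface-correction'' functions $\phi_i - L_i$ (where $L_i$ is the standard linear nodal basis) shows that its seminorms are controlled by the residual chord-jumps of $\tilde u$ and $\beta\nabla\tilde u\cdot\bn$ along $\overline{DE}$. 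Since $u$ itself satisfies \eqref{eq:interface1}--\eqref{eq:interface2} along $\Gamma$, these chord-jumps pick up an extra $h$-factor from the $O(h^2)$ deviation of $\Gamma$ from $\overline{DE}$, and are therefore bounded by $C h^2 \|u\|_{\tilde H^2(T)}$. Combining this with the classical Bramble--Hilbert estimate on each $T^\pm_{\mathrm{lin}}$,
\begin{equation*}
\|\tilde u^\pm - L_h\tilde u^\pm\|_{0,T} + h\,|\tilde u^\pm - L_h \tilde u^\pm|_{1,T} \le C h^2\|\tilde u^\pm\|_{2,T},
\end{equation*}
and the $O(h^3)$ sliver bound (to transfer from $T^\pm_{\mathrm{lin}}$ back to $T^\pm$), then summing $\pm$ and using the extension stability yields the claimed inequality.

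The main obstacle is robustness of the constants with respect to the interface location. If $\Gamma$ cuts off a very small sliver near one of the vertices of $T$, the explicit formulas for the IFE coefficients $a_i^\pm, b_i^\pm, c_i^\pm$ in \eqref{eq: linear IFE function} could a priori blow up, and with them the correction functions $\phi_i - L_i$ in a suitable norm, which would destroy the estimate. Establishing an interface-independent bound is the genuinely delicate part of the cited work: one solves the linear system \eqref{eq: nodal condition}--\eqref{eq: continuity2} explicitly, reduces the geometric configurations up to an affine mapping to a compact reference family, and verifies the bounds uniformly on that family.
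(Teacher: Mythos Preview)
The paper does not prove this lemma; it is stated as a citation of \cite{2004LiLinLinRogers,2015LinLinZhang} and no argument is given in the text. Your outline is a faithful sketch of the proof strategy used in those references (extensions of $u^\pm$ across $\Gamma$, comparison with a piecewise linear auxiliary interpolant on the chordal partition, control of the mismatch via the jump conditions and the $O(h^2)$ deviation of $\Gamma$ from $\overline{DE}$, and finally the uniform-in-interface bounds on the IFE basis obtained by solving \eqref{eq: nodal condition}--\eqref{eq: continuity2} explicitly). So there is no discrepancy to flag: you have supplied what the paper merely quotes, and your identification of the interface-independence of the constant as the genuinely technical step is exactly right.
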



\begin{lemma}
For every $u\in\tilde{H}^3(\Omega)$ satisfying the interface jump conditions, there exists a constant $C$ independent of the interface such that its interpolation $I_hu$ in the IFE space $V_h$ has the following bound:
\begin{eqnarray}
\|\beta\nabla (u-I_h u)|_T\cdot\bn_e\|_e^2\le C\Big(h^2\|u\|_{\tilde{H}^3(\Omega)}^2+h\|u\|_{\tilde{H}^2(T)}^2\Big),
\end{eqnarray}
where $T$ is an interface element and $e$ is one of its interface edge.
\end{lemma}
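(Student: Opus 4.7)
\medskip

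\noindent\textbf{Proof plan.} The quantity to be estimated is an edge flux error on an interface element, and the asymmetry between the two terms on the right-hand side ($h^2$ paired with a global $\tilde H^3$ norm and $h$ paired with a local $\tilde H^2$ norm) is a strong hint about the route: apply a standard trace inequality on each sub-element of $T$, split the resulting volume terms into a piece where the straight-line partition $\overline{DE}$ agrees with the curved interface $\Gamma$ (where the previous Lemma applies directly) and a small ``mismatch'' region of area $O(h^3)$ (which pays for the $h^2\|u\|_{\tilde H^3(\Omega)}^2$ term). First I would split the edge: since $e$ is an interface edge, let $D=e\cap\Gamma$, write $e=e^+\cup e^-$ with $e^\pm\subset\overline{\Omega^\pm}$, and treat $e^+$ (the $e^-$ case is symmetric). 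Because the endpoints of the line $\overline{DE}$ lie on $\Gamma$, we have $e^\pm\subset\partial T^{\pm}_{DE}$, where $T^{\pm}_{DE}$ are the two sub-triangles of $T$ formed by the chord $\overline{DE}$.

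Next, I would introduce a Sobolev extension $\tilde u^+\in H^3$ of $u^+|_{T\cap\Omega^+}$ to a fixed-size neighborhood of $T$, satisfying $\|\tilde u^+\|_{H^3(T^+_{DE})}\le C\|u\|_{\tilde H^3(\Omega)}$ and $\|\tilde u^+\|_{H^2(T^+_{DE})}\le C\|u\|_{\tilde H^2(T)}$. On $e^+$ we have $u=\tilde u^+$ pointwise, and by construction $(I_hu)|_{T^+_{DE}}=I_hu^+$ is a \emph{linear} polynomial, so
\begin{equation*}
\|\beta\nabla(u-I_hu)\cdot\bn_e\|_{e^+}^2
 = \|\beta^+\nabla(\tilde u^+-I_hu^+)\cdot\bn_e\|_{e^+}^2.
\end{equation*}
The standard trace inequality on $T^+_{DE}$, combined with $D^2(I_hu^+)=0$, then yields
\begin{equation*}
\|\beta^+\nabla(\tilde u^+-I_hu^+)\cdot\bn_e\|_{e^+}^2
 \le C\Big(h^{-1}\|\nabla(\tilde u^+-I_hu^+)\|_{T^+_{DE}}^2
        + h\,\|D^2\tilde u^+\|_{T^+_{DE}}^2\Big).
\end{equation*}
The second term is immediately bounded by $Ch\|u\|_{\tilde H^2(T)}^2$ via the extension estimate, producing the desired local term.

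The main step is the first term. Split $T^+_{DE}=R\cup M$, where $R:=T^+_{DE}\cap\Omega^+$ is the ``matching'' region in which $\tilde u^+=u^+=u$, and $M:=T^+_{DE}\cap\Omega^-$ is the thin ``mismatch'' region sandwiched between $\overline{DE}$ and $\Gamma$. On $R$, Lemma \ref{eq: int error elem} gives
\begin{equation*}
\|\nabla(\tilde u^+-I_hu^+)\|_R^2
 \le \|\nabla(u-I_hu)\|_T^2
 \le Ch^2\|u\|_{\tilde H^2(T)}^2,
\end{equation*}
contributing (after the $h^{-1}$ factor) to the $h\|u\|_{\tilde H^2(T)}^2$ term. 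On $M$, I would exploit the $C^2$-regularity of $\Gamma$ (hypothesis (H3)) to obtain the geometric fact $|M|\le Ch^3$, since $\mathrm{dist}(\overline{DE},\Gamma)=O(h^2)$ over a length $O(h)$. Then $\tilde u^+\in H^3\hookrightarrow W^{1,\infty}$ in 2D (on the fixed-size extension domain) gives $\|\nabla\tilde u^+\|_{L^\infty}\le C\|u\|_{\tilde H^3(\Omega)}$, so $\|\nabla\tilde u^+\|_M^2\le C h^3\|u\|_{\tilde H^3(\Omega)}^2$; an analogous pointwise bound on the constant vector $\nabla I_hu^+$, obtained from Sobolev control of the nodal values together with a local inverse estimate for the IFE shape functions, yields $\|\nabla I_hu^+\|_M^2\le Ch^3\|u\|_{\tilde H^3(\Omega)}^2$. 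Combining, $h^{-1}\|\nabla(\tilde u^+-I_hu^+)\|_M^2\le Ch^2\|u\|_{\tilde H^3(\Omega)}^2$, which supplies the first target term. Summing over the $e^+$ and $e^-$ pieces finishes the proof.

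The main obstacle is the mismatch-region estimate, specifically showing $\|\nabla I_hu^+\|_M^2\le Ch^3\|u\|_{\tilde H^3(\Omega)}^2$: a naive inverse-type bound $|\nabla I_hu^+|\le Ch^{-1}\|u\|_\infty$ times $|M|\le Ch^3$ would only give $O(h)$, which is too large once divided by $h$. One must either use a sharper IFE inverse estimate that exploits the continuity of the co-normal flux across $\overline{DE}$ (so that $|\nabla I_hu^+|$ is controlled by local interpolation data of order $O(\|u\|_{\tilde H^3})$ rather than $O(h^{-1})$), or combine the bound on $\nabla\tilde u^+$ with a triangle inequality that absorbs $\nabla I_hu^+$ via the matching-region estimate, using the smallness of $|M|$ quantitatively. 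This is the step requiring the IFE-specific flux estimate \eqref{eq:IFE_flux} cited above.
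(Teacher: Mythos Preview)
The paper does not prove this lemma; it is quoted from the references \cite{2004LiLinLinRogers,2015LinLinZhang} (see the sentence preceding Lemma~4.2 in the paper). Consequently there is no in-paper argument to compare against, and I can only assess your proposal on its own merits.

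Your overall strategy---extend $u^{\pm}$ smoothly, reduce to a trace bound for the smooth difference $\tilde u^{+}-I_hu^{+}$, and then separate the matching region $R$ from the thin mismatch region $M$ of area $O(h^{3})$---is the standard mechanism behind this type of IFE flux estimate, and it correctly explains why the global $\tilde H^{3}$ norm appears. Two technical points, however, are not yet under control. First, you apply ``the standard trace inequality on $T^{+}_{DE}$''; but $T^{+}_{DE}$ may be an arbitrarily thin sliver when the interface passes near a vertex, so its trace constant is \emph{not} independent of the interface location. The usual remedy is to observe that both $\tilde u^{+}$ and the linear polynomial $I_hu^{+}$ extend to the whole shape-regular element $T$, and to apply the trace inequality on $T$ (or on a fixed macroelement) rather than on $T^{+}_{DE}$; doing so changes the volume term you must bound and forces you to handle $\|\nabla(\tilde u^{+}-I_hu^{+})\|_{T^{-}_{DE}}$ as well.

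Second, the mismatch-region step you flag as the main obstacle is indeed a genuine gap in the plan as written. Your first suggestion---pointwise control of $|\nabla I_hu^{+}|$ by $\|u\|_{\tilde H^{3}(\Omega)}$---does not follow from any standard IFE inverse estimate: the IFE trace inequality \eqref{eq:IFE_flux} gives at best $|\nabla I_hu^{+}|^{2}\le Ch^{-2}\|\nabla I_hu\|_{0,T}^{2}\le Ch^{-2}\|u\|_{\tilde H^{1}(T)}^{2}$, which after multiplication by $|M|\le Ch^{3}$ and division by $h$ leaves a nonconvergent $O(1)$ term. Your second suggestion (absorb $\nabla I_hu^{+}$ via the matching-region estimate) is the correct idea, but it must be carried out with care: one writes $\nabla(\tilde u^{+}-I_hu^{+})$ as a function in $H^{1}(T)$, controls its $L^{\infty}$ norm on $T$ through $\|\nabla(\tilde u^{+}-I_hu^{+})\|_{0,T}$ \emph{and} $\|D^{2}\tilde u^{+}\|_{0,T}$ (not through $\nabla I_hu^{+}$ alone), and then uses $|M|\le Ch^{3}$. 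Until one of these routes is made precise, the proposal remains a sketch rather than a proof.
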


Next, we present some lemmas that will be used in our error analysis.
%

\begin{lemma}
There exists a constant $C$ such that
\begin{eqnarray}\label{eq:stab-estimate}
|S(Q_hw,v)|\le C h\|w\|_{\tilde{H}^2(\Omega)}\3bar v\3bar,~~~
\forall w\in \tilde H^{2}(\Omega),~~\forall v\in V_h
\end{eqnarray}
where $S(Q_hw,v)=\sum_Th^{-1}\langle Q_bQ_0w-Q_bw,Q_bv_0-v_b\rangle_\pT$.
\end{lemma}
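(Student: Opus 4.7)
The plan is to bound $S(Q_hw,v)$ by Cauchy--Schwarz and reduce each factor to quantities we already know how to estimate.

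First I would apply the Cauchy--Schwarz inequality on each element followed by the discrete Cauchy--Schwarz over $\mathcal{T}_h$:
\begin{equation*}
|S(Q_hw,v)| \le \Bigl(\sum_{T}h^{-1}\|Q_bQ_0w-Q_bw\|_{\pT}^2\Bigr)^{1/2}\Bigl(\sum_{T}h^{-1}\|Q_bv_0-v_b\|_{\pT}^2\Bigr)^{1/2}.
\end{equation*}
The second factor is controlled directly by the energy norm: from the definition of $\3bar v\3bar$, it is bounded by $\rho^{-1/2}\3bar v\3bar$. The task then reduces to showing that the first factor is $O(h\|w\|_{\tilde H^2(\Omega)})$.

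For the first factor, I would use linearity of $Q_b$ to write $Q_bQ_0w-Q_bw=Q_b(Q_0w-w)$, then invoke \eqref{eq:Qbv} to bound
\begin{equation*}
\|Q_b(Q_0w-w)\|_{\pT} \le \|Q_0w-w\|_{\pT}.
\end{equation*}
On a regular element, $Q_0w$ is the standard linear nodal interpolant; on an interface element it lies in $\tilde{P}_1(T)$ and is continuous across $\Gamma\cap T$, so $Q_0w-w\in H^1(T)$ in either case. The trace inequality then gives
\begin{equation*}
\|Q_0w-w\|_{\pT}^2 \le C\bigl(h^{-1}\|Q_0w-w\|_{0,T}^2+h\|\nabla(Q_0w-w)\|_{0,T}^2\bigr),
\end{equation*}
and the IFE interpolation estimate \eqref{eq: int error elem} (together with its standard analogue on regular elements) bounds the right-hand side by $Ch^3\|w\|_{\tilde H^2(T)}^2$. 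Multiplying by $h^{-1}$ and summing over $T$ yields
\begin{equation*}
\sum_Th^{-1}\|Q_bQ_0w-Q_bw\|_{\pT}^2 \le Ch^2\|w\|_{\tilde H^2(\Omega)}^2,
\end{equation*}
which, combined with the energy-norm bound on the second factor, completes the argument.

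I expect the main subtlety to be the justification that $Q_0w-w$ admits the scalar trace inequality on interface elements. This relies on the fact that the IFE shape functions are built precisely so that $Q_0w\in H^1(T)$ (the continuity condition \eqref{eq: continuity1} makes each basis function continuous across $\Gamma\cap T$, with piecewise-affine pieces having bounded gradients on each side), and on the validity of the interpolation estimate \eqref{eq: int error elem} with a constant independent of the interface location. All other steps are routine Cauchy--Schwarz / trace manipulations.
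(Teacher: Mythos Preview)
Your proposal is correct and follows essentially the same route as the paper's own proof: Cauchy--Schwarz on each element, the trace inequality applied to $Q_0w-w\in H^1(T)$, and the interpolation estimate \eqref{eq: int error elem}. The only cosmetic difference is that the paper drops $Q_b$ from the first slot by using that $Q_bv_0-v_b\in P_0(e)$ (so $\langle Q_b(Q_0w-w),Q_bv_0-v_b\rangle_e=\langle Q_0w-w,Q_bv_0-v_b\rangle_e$), whereas you invoke the contraction bound \eqref{eq:Qbv}; both lead to the same estimate.
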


\begin{proof}

Using the Cauchy-Schwarz inequality, trace inequality, and the interpolation error bound \eqref{eq: int error elem}, 
we have
\begin{eqnarray*}
\left|S(Q_hw,v)\right|
&=&\left|\sum_{T\in\mathcal{T}_h^R\cup\mathcal{T}_h^I}h^{-1}\langle Q_b(Q_0w)-Q_bw,Q_bv_0-v_b\rangle_{\partial T}\right|
=\left|\sum_{T\in\mathcal{T}_h^R}h^{-1}\langle Q_0w-w,Q_bv_0-v_b\rangle_{\partial T}\right|\\
&\le& C\bigg(\sum_{T\in\mathcal{T}_h^R}h^{-2}\|Q_0w-w\|_T^2+\|\nabla(Q_0w-w)\|_T^2\bigg)^{1/2}\bigg(\sum_{T\in\mathcal{T}_h^R}h^{-1}\|Q_bv_0-v_b\|_{\partial T}^2\bigg)^{1/2}\\
&\le& Ch\|w\|_{\tilde H^2(\Omega)}\3bar v\3bar.
\end{eqnarray*}
\end{proof}

\begin{lemma}
There exists a constant $C$ such that
\begin{eqnarray} \label{eq: proj sum}
\sum_{T\in\mathcal{T}_h}\|Q_0 u-u\|_\pT^2\le Ch^3\|u\|_{\tilde{H}^2(\Omega)}^2
\end{eqnarray}
\end{lemma}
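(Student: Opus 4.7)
The plan is to split the sum by element type and apply a trace inequality together with the interpolation bounds already recalled in the excerpt. First I would separate
\[
\sum_{T\in\mathcal{T}_h}\|Q_0u-u\|_{\partial T}^2
=\sum_{T\in\mathcal{T}_h^R}\|Q_0u-u\|_{\partial T}^2+\sum_{T\in\mathcal{T}_h^I}\|Q_0u-u\|_{\partial T}^2,
\]
and treat the two contributions in parallel. For a regular element, $Q_0u$ is the standard nodal $P_1$ interpolant, so the classical estimate gives $\|Q_0u-u\|_T+h\|\nabla(Q_0u-u)\|_T\le Ch^2\|u\|_{H^2(T)}$. For an interface element, since $u$ satisfies $\jump{u}|_\Gamma=0$ and the IFE shape functions enforce \eqref{eq: continuity1}, both $u$ and $Q_0u$ are continuous across $\Gamma$, so $Q_0u-u\in H^1(T)$; the IFE interpolation bound \eqref{eq: int error elem} then yields $\|Q_0u-u\|_T+h\|\nabla(Q_0u-u)\|_T\le Ch^2\|u\|_{\tilde H^2(T)}$.

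Next, on each element I would apply the trace inequality to $v=Q_0u-u\in H^1(T)$:
\[
\|Q_0u-u\|_{\partial T}^2\le C\left(h_T^{-1}\|Q_0u-u\|_T^2+h_T\|\nabla(Q_0u-u)\|_T^2\right).
\]
Substituting the interpolation bounds gives $\|Q_0u-u\|_{\partial T}^2\le Ch^3\|u\|_{H^2(T)}^2$ on regular elements and $\|Q_0u-u\|_{\partial T}^2\le Ch^3\|u\|_{\tilde H^2(T)}^2$ on interface elements. Summing over $T\in\mathcal{T}_h$ and noting that the local pieces combine into $\|u\|_{\tilde H^2(\Omega)}^2$ completes the proof.

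The only delicate point is the trace step on interface elements: the standard trace inequality requires $H^1(T)$ regularity, which would fail for a generic piecewise $H^2$ function across $\Gamma$. The argument hinges on using the continuity of both $u$ (from the jump condition) and the IFE interpolant (built into $\tilde P_1(T)$) so that the difference $Q_0u-u$ actually lies in $H^1(T)$, after which the classical trace inequality may be invoked with a constant independent of the interface location. Beyond this observation, the remaining steps are routine estimates combining the already-cited interpolation bound \eqref{eq: int error elem}.
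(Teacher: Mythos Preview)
Your proposal is correct and follows essentially the same route as the paper: apply the elementwise trace inequality to $Q_0u-u\in H^1(T)$, substitute the interpolation bound \eqref{eq: int error elem}, and sum. The paper's proof is simply more terse (it does not explicitly separate regular and interface elements), while your added remark that $Q_0u-u\in H^1(T)$ on interface elements---because both $u$ and the IFE interpolant are continuous across $\Gamma$---makes explicit a point the paper takes for granted.
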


\begin{proof}
Applying the trace inequality and the interpolation error bound \eqref{eq: int error elem},
we have 
\begin{equation*}
\|Q_0 u-u\|_\pT \leq C\left(h^{1/2}|Q_0u-u|_{1,T}+h^{-1/2}\|Q_0u-u\|_{0,T}\right)
\le Ch^{3/2}\|u\|_{\tilde H^2(T)}. 
\end{equation*}
Squaring both sides and summing over all elements lead to  the estimate \eqref{eq: proj sum} 
\end{proof}

\begin{lemma}
Let $u_h=\{u_0,u_b\}$ and $u$ be the solutions to problem (\ref{eq:scheme}) and (\ref{eq:pde})-(\ref{eq:bc}), respectively. Let $Q_hu=\{Q_0u,Q_bu\}$ be the projection of $u$ to the finite element space $V_h$. Then, for every function $v\in V_h^0$, one has the following error equation
{\begin{eqnarray}\label{eq:error-eq}
A(Q_hu-u_h,v)=L_u(v)+S(Q_hu,v),
\end{eqnarray} }
where
\begin{eqnarray}\label{eq:luv}
L_u(v)=\sum_{T\in\mathcal{T}_h}\bigg(\langle u-Q_0u,\beta\nabla v_0\cdot\bn-Q_b(\beta\nabla v_0\cdot\bn)\rangle_{\pT}+\langle\beta\nabla u\cdot\bn-Q_b(\beta\nabla Q_0u\cdot\bn),v_0-v_b\rangle_{\pT}\bigg).
\end{eqnarray}
\end{lemma}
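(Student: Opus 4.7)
The plan is to derive the error equation by comparing $A(Q_hu,v)$ directly with $(f,v_0)$ and using that $A(u_h,v)=(f,v_0)$ from (\ref{eq:scheme}). Since $A(Q_hu-u_h,v)=A(Q_hu,v)-(f,v_0)$, the task reduces to showing $A(Q_hu,v)-(f,v_0)=L_u(v)+S(Q_hu,v)$. This will be accomplished by elementwise integration by parts on the exact PDE and on the volume term of $A$, followed by algebraic rearrangement using the standard $L^2$-projection identities of $Q_b$.

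The first step is a consistency identity for the exact solution. On an interface element I split $T$ along $\Gamma_T$ into $T^\pm$, apply $-\nabla\cdot(\beta\nabla u)=f$ on each $T\cap\Omega^\pm$, and integrate by parts to pick up boundary traces on $\partial T$ and on $\Gamma_T$. The contributions along $\Gamma_T$ cancel because $v_0$ is continuous across $\Gamma$ (IFE property (\ref{eq: continuity1})) and $\ljump\beta\nabla u\cdot\bn\rjump=0$ on $\Gamma$ by (\ref{eq:interface2}). Summing over $T$, using the single-valuedness of $v_b$ and the continuity of $\beta\nabla u\cdot\bn$ across interior edges together with $v_b|_{\partial\Omega}=0$, I arrive at
\begin{equation*}
\sum_{T\in\mathcal{T}_h}\left[(\beta\nabla u,\nabla v_0)_T-\langle\beta\nabla u\cdot\bn,v_0-v_b\rangle_{\pT}\right]=(f,v_0).
\end{equation*}

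With this identity in hand, I expand $A(Q_hu,v)$ according to (\ref{eq:blinear}), split $(\beta\nabla Q_0u,\nabla v_0)_T=(\beta\nabla u,\nabla v_0)_T+(\beta\nabla(Q_0u-u),\nabla v_0)_T$, and convert the second piece into a boundary integral by integration by parts. The decisive observation here is that $\nabla\cdot(\beta\nabla v_0)=0$ in the distributional sense on each $T$: on a regular element it is automatic since $v_0\in P_1$ and $\beta$ is constant, while on an interface element it follows from the IFE flux continuity (\ref{eq: continuity2}), and the jump of $Q_0u-u$ across $\Gamma_T$ vanishes because both $u$ and the IFE interpolant $Q_0u$ are continuous there. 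After subtracting the consistency identity from the expanded $A(Q_hu,v)$, the pair of terms $\langle\beta\nabla u\cdot\bn,v_0-v_b\rangle-\langle Q_b(\beta\nabla Q_0u\cdot\bn),v_0-v_b\rangle$ already produces the second summand of $L_u(v)$. For the remaining pair $\langle Q_0u-u,\beta\nabla v_0\cdot\bn\rangle_{\pT}-\langle Q_b(\beta\nabla v_0\cdot\bn),Q_0u-Q_bu\rangle_{\pT}$, I insert $\pm\,Q_b(\beta\nabla v_0\cdot\bn)$ into the first bracket and exploit $\langle\phi-Q_b\phi,c\rangle_e=0$ for any $c\in P_0(e)$ together with $\langle Q_b\phi,\psi\rangle_e=\langle\phi,Q_b\psi\rangle_e$ to collapse the residual into the first summand of $L_u(v)$, while the leftover $Q_bQ_0u-Q_bu$ contribution combines with the stabilizer of $A$ to produce $S(Q_hu,v)$.

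The main obstacle is the interface-element bookkeeping in the two integrations by parts, where one has to verify that splitting the element at $\Gamma_T$ and reassembling yields no leftover line integrals along $\Gamma_T$; this relies on combining the IFE conditions (\ref{eq: continuity1})--(\ref{eq: continuity2}) with the interface jump conditions (\ref{eq:interface1})--(\ref{eq:interface2}) satisfied by $u$. The rest is a careful but mechanical application of the projection identities of $Q_b$ on each edge.
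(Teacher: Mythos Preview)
Your proposal is correct and follows essentially the same route as the paper: both arguments establish $A(Q_hu,v)=(f,v_0)+L_u(v)+S(Q_hu,v)$ by integrating the PDE by parts on each element, exploiting that $\nabla\cdot(\beta\nabla v_0)=0$ distributionally (the IFE flux condition \eqref{eq: continuity2} on interface elements), and then invoking the $Q_b$-projection identities to rearrange the boundary terms. Your single integration by parts on $(\beta\nabla(Q_0u-u),\nabla v_0)_T$ is algebraically equivalent to the paper's forward--backward pair (integrate $(\beta\nabla u,\nabla v_0)_T$ by parts, replace $u$ by $Q_0u$ in the vanishing volume term, integrate back), just organized a bit more economically.
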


\begin{proof} For any $v = (v_0,v_b)\in V_h^0$, we multiply (\ref{eq:pde}) by $v_0$ to obtain
\begin{eqnarray*}
(f,v_0)&=&\sum_{T\in T_h}(-\nabla \cdot\beta\nabla u,v_0)_T=\sum_{T\in T_h}\bigg(-\langle\beta \nabla u\cdot\bn,v_0\rangle_{\pT}+(\beta \nabla u,\nabla v_0)_T\bigg)\\
&=&\sum_{T\in T_h}\bigg(-\langle\beta\nabla u\cdot\bn,v_0\rangle_{\pT}+\langle u,\beta \nabla v_0\cdot\bn\rangle_{\pT}-(u,\nabla\cdot\beta\nabla v_0)_T\bigg)\\
&=&\sum_{T\in T_h}\bigg(-\langle\beta\nabla u\cdot\bn,v_0\rangle_{\pT}+\langle u,\beta\nabla v_0\cdot\bn\rangle_{\pT}-(Q_0 u,\nabla\cdot\beta \nabla v_0)_T\bigg)\\
&=&\sum_{T\in T_h}\bigg(-\langle\beta\nabla u\cdot\bn,v_0\rangle_{\pT}+\langle u,\beta\nabla v_0\cdot\bn\rangle_{\pT}-\langle Q_0 u,\beta\nabla v_0\cdot\bn\rangle_{\pT}+(\nabla Q_0 u,\beta\nabla v_0)_T\bigg)\\
&=&\sum_{T\in T_h}\bigg((\beta\nabla Q_0 u,\nabla v_0)_T-\langle Q_0u,\beta\nabla v_0\cdot\bn \rangle_{\pT}+\langle u,\beta\nabla v_0\cdot\bn\rangle_{\pT}-\langle\beta\nabla u\cdot\bn,v_0-v_b\rangle_{\pT}\bigg).
\end{eqnarray*}
The last equation is because $v_b$ is a constant on every edge, and the flux $\beta\nabla u\cdot \mathbf{n}$ is continuous. 
Then by the definition of the bilinear form \eqref{eq:blinear}, 
we have
\begin{eqnarray}
A(Q_h u,v)&=&(f,v_0)+ \sum_{T\in\mathcal{T}_h}\bigg(\langle u-Q_0u,Q_b(\beta\nabla v_0\cdot\bn)-\beta\nabla v_0\cdot\bn\rangle_{\pT}+\langle\beta\nabla u\cdot\bn-Q_b(\beta\nabla Q_0u\cdot\bn),v_0-v_b\rangle_{\pT}\notag\\
&&+\rho h^{-1}\langle Q_bQ_0u-Q_bu,Q_bv_0-v_b\rangle_{\pT}\bigg).\label{eq:L2u}
\end{eqnarray}
Subtracting (\ref{eq:scheme}) from the above equation, it is obtained that
\begin{eqnarray*}
A(Q_hu-u_h,v)
&=&\sum_{T\in\mathcal{T}_h}\bigg(\langle u-Q_0u,Q_b(\beta\nabla v_0\cdot\bn)-\beta\nabla v_0\cdot\bn\rangle_{\pT}
+\langle\beta\nabla u\cdot\bn-Q_b(\beta\nabla Q_0u\cdot\bn),v_0-v_b\rangle_{\pT}\\
&&+\rho h^{-1}\langle Q_bQ_0u-Q_bu,Q_bv_0-v_b\rangle_{\pT}\bigg)\\
&=& L_u(v)+S(Q_hu,v),
\end{eqnarray*}
which completes the proof.
\end{proof}

\begin{lemma}
The linear form $L_u(v)$ in \eqref{eq:luv} has the following error estimate
\begin{eqnarray}\label{eq: error luv}
L_u(v) \leq Ch\|u\|_{\tilde{H}^3(\Omega)} \3bar v\3bar.
\end{eqnarray}
where the constant $C$ is independent of the interface location.
\end{lemma}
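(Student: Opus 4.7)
My plan is to decompose $L_u(v) = I_1 + I_2$ corresponding to the two edge-integral summands in \eqref{eq:luv}, and to estimate each by Cauchy--Schwarz combined with the trace inequalities already recorded (standard and IFE-type), the $L^2$-stability of $Q_b$, the interpolation bounds \eqref{eq: int error elem} and the IFE flux interpolation lemma, and hypothesis (H4) to sum the interface-element contributions. A recurring theme is that many residual terms vanish on regular elements, so the work concentrates on the thin interface strip where the IFE-specific estimates do the heavy lifting.

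For $I_1 = \sum_T \langle u - Q_0 u,\ \beta\nabla v_0\cdot\bn - Q_b(\beta\nabla v_0\cdot\bn)\rangle_{\pT}$, the remark following \eqref{eq:blinear} gives $\beta\nabla v_0\cdot\bn = Q_b(\beta\nabla v_0\cdot\bn)$ on every regular element, so the contribution from $\mathcal{T}_h^R$ is zero. On each $T\in\mathcal{T}_h^I$, I apply Cauchy--Schwarz, use $\|w - Q_b w\|_e \le \|w\|_e$ together with the IFE trace inequality \eqref{eq:IFE_flux} to obtain $\|\beta\nabla v_0\cdot\bn - Q_b(\beta\nabla v_0\cdot\bn)\|_{\pT} \le Ch^{-1/2}\|\sqrt\beta\nabla v_0\|_T$, and combine the standard trace inequality with \eqref{eq: int error elem} to get $\|u - Q_0 u\|_{\pT} \le Ch^{3/2}\|u\|_{\tilde H^2(T)}$. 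A final Cauchy--Schwarz over interface elements then yields $|I_1| \le Ch\|u\|_{\tilde H^2(\Omega)}\3bar v\3bar \le Ch\|u\|_{\tilde H^3(\Omega)}\3bar v\3bar$.

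For $I_2 = \sum_T \langle \beta\nabla u\cdot\bn - Q_b(\beta\nabla Q_0 u\cdot\bn),\ v_0 - v_b\rangle_{\pT}$, I split the first factor as $\beta\nabla(u - Q_0 u)\cdot\bn + \bigl[\beta\nabla Q_0 u\cdot\bn - Q_b(\beta\nabla Q_0 u\cdot\bn)\bigr]$, producing $I_2 = I_{2a} + I_{2b}$. In $I_{2a}$, Cauchy--Schwarz together with the IFE flux interpolation lemma $\|\beta\nabla(u - I_h u)\cdot\bn\|_e^2 \le C(h^2\|u\|_{\tilde H^3(\Omega)}^2 + h\|u\|_{\tilde H^2(T)}^2)$ and with \eqref{eq: insertQbv0} (which yields $\sum_T\|v_0 - v_b\|_{\pT}^2 \le Ch\,\3bar v\3bar^2$) gives $|I_{2a}| \le Ch\|u\|_{\tilde H^3(\Omega)}\3bar v\3bar$; here (H4) absorbs the factor $O(h^{-1})$ produced when the $h^2\|u\|_{\tilde H^3(\Omega)}^2$ term is summed over interface edges. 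In $I_{2b}$, the integrand vanishes on every regular element because $\beta\nabla Q_0 u\cdot\bn$ is constant on each edge; on interface elements I exploit the orthogonality $\langle w - Q_b w,\ Q_b v_0 - v_b\rangle_e = 0$ (valid since $Q_b v_0 - v_b$ is constant on $e$) to replace $v_0 - v_b$ by $v_0 - Q_b v_0$, and then apply Cauchy--Schwarz with $\|v_0 - Q_b v_0\|_e \le Ch^{1/2}\|\nabla v_0\|_T$ and the IFE trace inequality on $\beta\nabla Q_0 u\cdot\bn$.

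The main technical hurdle is $I_{2b}$: the IFE trace bound on $\|\beta\nabla Q_0 u\cdot\bn\|_{\pT}$ alone produces only $Ch^{-1/2}\|\sqrt\beta\nabla Q_0 u\|_T$ and does not directly supply the extra $h$. To close the estimate I would bound $\|\sqrt\beta\nabla Q_0 u\|_T \le \|\sqrt\beta\nabla u\|_T + Ch\|u\|_{\tilde H^2(T)}$ using \eqref{eq: int error elem}, invoke the Sobolev embedding $\tilde H^3(\Omega)\hookrightarrow W^{1,\infty}(\Omega^{\pm})$ on an element of area $O(h^2)$ to obtain $\|\sqrt\beta\nabla u\|_T \le Ch\|u\|_{\tilde H^3(\Omega)}$, and then use (H4) so that the summation runs over only $O(h^{-1})$ interface elements. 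Combining the bounds for $I_1$, $I_{2a}$, and $I_{2b}$ then delivers the desired estimate $|L_u(v)| \le Ch\|u\|_{\tilde H^3(\Omega)}\3bar v\3bar$.
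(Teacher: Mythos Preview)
Your treatment of $I_1$ and of $I_{2a}$ is sound and matches in spirit what the paper does. The gap is in $I_{2b}$: the Sobolev-embedding workaround you propose yields only $O(h^{1/2})$, not $O(h)$. Concretely, after your orthogonality step and Cauchy--Schwarz you have, per interface element,
\[
\bigl|\langle \beta\nabla Q_0u\cdot\bn - Q_b(\beta\nabla Q_0u\cdot\bn),\,v_0-Q_bv_0\rangle_{\pT}\bigr|
\le C\,\|\sqrt\beta\,\nabla Q_0u\|_T\,\|\nabla v_0\|_T.
\]
Using $\|\sqrt\beta\,\nabla u\|_T\le Ch\|u\|_{\tilde H^3(\Omega)}$ gives the \emph{global} norm on the right, so when you sum over $T\in\mathcal{T}_h^I$ you must apply the discrete Cauchy--Schwarz to $\sum_{T\in\mathcal{T}_h^I}\|\nabla v_0\|_T$, which costs a factor $(\#\mathcal{T}_h^I)^{1/2}\sim h^{-1/2}$. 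The net power is therefore $h\cdot h^{-1/2}=h^{1/2}$, not $h$. Hypothesis (H4) is already used here and cannot be invoked a second time to recover the missing half power; the root cause is that the projection error of the \emph{discrete} flux $\beta\nabla Q_0u\cdot\bn - Q_b(\beta\nabla Q_0u\cdot\bn)$ carries no intrinsic $h$-smallness, since $\beta\nabla Q_0u\cdot\bn$ is merely piecewise constant on an interface edge.

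The paper avoids this by splitting the \emph{second} factor $v_0-v_b=(v_0-Q_bv_0)+(Q_bv_0-v_b)$ rather than the first. In the piece paired with $v_0-Q_bv_0$ (which has zero edge mean) the constant $Q_b(\beta\nabla Q_0u\cdot\bn)$ can be replaced by $Q_b(\beta\nabla u\cdot\bn)$, yielding the projection error of the \emph{exact} flux $\beta\nabla u\cdot\bn - Q_b(\beta\nabla u\cdot\bn)$, which does carry an extra $h$. In the piece paired with the constant $Q_bv_0-v_b$ one removes $Q_b$ from $Q_b(\beta\nabla Q_0u\cdot\bn)$ and obtains $\beta\nabla(u-Q_0u)\cdot\bn$, handled by Lemma~4.3 exactly as in your $I_{2a}$. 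If you prefer to keep your decomposition, an equivalent repair is to insert and subtract $\beta\nabla u\cdot\bn$ inside $I_{2b}$ and bound
\[
\|\beta\nabla Q_0u\cdot\bn - Q_b(\beta\nabla Q_0u\cdot\bn)\|_e
\le \|\beta\nabla(u-Q_0u)\cdot\bn\|_e + \|\beta\nabla u\cdot\bn - Q_b(\beta\nabla u\cdot\bn)\|_e,
\]
then use Lemma~4.3 on the first term and edge-approximation of the smooth flux on the second; this recovers the full $O(h)$ with local norms and no $h^{1/2}$ loss.
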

\begin{proof}
In \eqref{eq:luv}, we denote $L_u(v)=I+II$. By Cauchy-Schwartz inequality, (\ref{eq:Qbv}), and \eqref{eq: proj sum}, we obtain
\begin{eqnarray*}
I&=&\sum_T \langle u-Q_0u,Q_b(\beta\nabla v_0\cdot\bn)-\beta\nabla v_0\cdot\bn\rangle_{\pT}\\
&\le& \sum_T(h^{1/2}\|Q_b(\beta\nabla v_0\cdot\bn)\|_\pT+h^{1/2}\|\beta\nabla v_0\cdot\bn\|_\pT) (h^{-1/2}\|Q_0u-u\|_\pT)\\
&\le& Ch\|u\|_{\tilde{H}^2(\Omega)} \sum_T\| \beta^{1/2}\nabla v_0\|_T\\
&\le& Ch\|u\|_{\tilde{H}^2(\Omega)} \3bar v\3bar.
\end{eqnarray*}

Next, by the trace inequality
\begin{eqnarray*}
II&=&\sum_T\langle\beta\nabla u\cdot\bn-Q_b(\beta\nabla Q_0u\cdot\bn),v_0-v_b\rangle_{\pT}\\
&=&\sum_T\langle\beta\nabla u\cdot\bn-Q_b(\beta\nabla Q_0u\cdot\bn),v_0-Q_bv_0\rangle_\pT+\langle\beta\nabla u\cdot\bn-Q_b(\beta\nabla Q_0u\cdot\bn),Q_bv_0-v_b\rangle_\pT\\
&=&\sum_T \langle\beta\nabla u\cdot\bn-Q_b(\beta\nabla u\cdot\bn),v_0-Q_bv_0\rangle_\pT+\langle\beta\nabla u\cdot\bn-\beta\nabla Q_0u\cdot\bn,Q_bv_0-v_b\rangle_\pT\\
&\le& \sum_T(h^{1/2}\|\beta\nabla u\cdot\bn-Q_b(\beta\nabla u\cdot\bn)\|_\pT)(h^{-1/2}\|v_0-Q_bv_0\|_\pT)\\
&&+(h^{1/2}\|\beta\nabla u\cdot\bn-\beta\nabla Q_0u\cdot\bn\|_\pT)(h^{-1/2}\|Q_bv_0-v_b\|_\pT)
\\
&\le& Ch \|u\|_{\tilde H^2(\Omega)}\sum_T\|\nabla v_0\|_T + \left(\sum_{T\in \mathcal{T}_h^I}\Big(h^3\|u\|_{\tilde{H}^3(\Omega)}^2+h^2\|u\|_{\tilde{H}^2(T)}^2\Big)+\sum_{T\in \mathcal{T}_h^R}h^2\|u\|_{{H}^2(T)}^2\right)^{1/2}\3bar v\3bar\\
&\le& h\|u\|_{\tilde{H}^3(\Omega)}\3bar v\3bar.
\end{eqnarray*}
In the last step, we use the assumption $(H_4)$ that the number of interface elements is of order $O(h^{-1})$. Combining the error bounds for $I$ and $II$, we obtain \eqref{eq: error luv}.
\end{proof}

Now we are ready to prove our main result. 
\begin{theorem}
{Let $Q_hu$ and $u_h$ be solutions to (\ref{eq:luv}) and (\ref{eq:scheme}), respectively. Then the following error estimate holds
\begin{eqnarray}\label{error}
\3bar Q_h u-u_h\3bar\le Ch\|u\|_{\tilde H^{3}{(\Omega)}},
\end{eqnarray}
where the hidden constant is independent of the interface location.
}
\end{theorem}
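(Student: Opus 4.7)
The plan is to combine the three ingredients already prepared in the excerpt: the coercivity of $A(\cdot,\cdot)$ established in the well-posedness theorem, the error equation $A(Q_h u-u_h,v) = L_u(v) + S(Q_h u,v)$ from the error-equation lemma, and the two consistency bounds for $L_u(v)$ and $S(Q_h u,v)$. First I would verify that $e_h := Q_h u - u_h$ actually lies in $V_h^0$, so that it is a legal test function. This follows because $u_h$ is defined with the boundary datum prescribed through $u_{hb}|_{\partial\Omega}$ in the standard WG way and $Q_b u|_{\partial\Omega}$ agrees with this boundary datum, so the boundary component of $e_h$ vanishes. Thus $e_h\in V_h^0$.

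Next I would use coercivity: there is a constant $c>0$ (depending on $\rho$ and $\beta_{\max}$, with $\rho>2\beta_{\max}$ as in the well-posedness proof) such that
\begin{equation*}
c\,\3bar e_h\3bar^{\,2} \le A(e_h,e_h).
\end{equation*}
Taking $v=e_h$ in the error equation gives
\begin{equation*}
A(e_h,e_h) = L_u(e_h) + S(Q_h u, e_h).
\end{equation*}
Now I would plug in the two previously proved estimates: $|S(Q_h u,e_h)|\le Ch\|u\|_{\tilde H^2(\Omega)}\3bar e_h\3bar$ and $|L_u(e_h)|\le Ch\|u\|_{\tilde H^3(\Omega)}\3bar e_h\3bar$. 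Since $\tilde H^3(\Omega)\hookrightarrow \tilde H^2(\Omega)$, both contributions are dominated by $Ch\|u\|_{\tilde H^3(\Omega)}\3bar e_h\3bar$, whence
\begin{equation*}
c\,\3bar e_h\3bar^{\,2} \le Ch\|u\|_{\tilde H^3(\Omega)}\,\3bar e_h\3bar,
\end{equation*}
and dividing by $\3bar e_h\3bar$ (handling the trivial case $\3bar e_h\3bar=0$ separately) yields the desired bound $\3bar Q_h u-u_h\3bar \le Ch\|u\|_{\tilde H^3(\Omega)}$.

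Essentially nothing in this final step is hard; the substantive work has already been distributed into the supporting lemmas. The one subtlety I would be careful about is the constant in coercivity: it depends on the choice of $\rho$, so I would make sure to state that $\rho>2\beta_{\max}$ is assumed (as in the well-posedness theorem) and that the hidden constant $C$ in \eqref{error} absorbs both $c^{-1}$ and the constants coming from the $L_u$ and $S$ estimates but remains independent of $h$ and of the location of the interface $\Gamma$ within the mesh (the latter being inherited from the interface-robust trace and interpolation bounds for IFE functions cited earlier).
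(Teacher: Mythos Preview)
Your proposal is correct and follows essentially the same route as the paper: take $v=e_h=Q_hu-u_h$ in the error equation, invoke coercivity of $A(\cdot,\cdot)$, and apply the bounds on $L_u$ and $S(Q_hu,\cdot)$ to conclude. The extra care you take in checking $e_h\in V_h^0$, handling the trivial case $\3bar e_h\3bar=0$, and tracking the dependence of the constant on $\rho$ are details the paper leaves implicit, but the argument is otherwise identical.
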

\begin{proof}
Taking $v=Q_hu-u_h$ in error equation (\ref{eq:error-eq}), and then from the above estimates, (\ref{eq:stab-estimate}), and combining with the coercivity of $A(\cdot,\cdot)$, we have 
\begin{eqnarray*}
c\3bar Q_h u-u_h\3bar^2 
&\le& A(Q_hu-u_h,Q_h u-u_h)\\
&=&L_u(Q_h u-u_h)+S(Q_hu,Q_h u-u_h)\\
&\le& Ch\|u\|_{\tilde{H}^3(\Omega)} \3bar Q_h u-u_h\3bar+ Ch\|u\|_{\tilde{H}^2(\Omega)}\3bar Q_h u-u_h\3bar\\
&\le& Ch\|u\|_{\tilde{H}^3(\Omega)} \3bar Q_h u-u_h\3bar.
\end{eqnarray*}
\end{proof}

\begin{remark}
In the error estimates \eqref{eq: error luv} and \eqref{error}, we need to the assumption that the regularity of the solution is piecewise $H^3$, which is usually higher than the usual piecewise $H^2$ assumption for numerical methods based on linear polynomials. However, this is only necessary for theoretical error analysis. In computation, piecewise $H^2$ assumption is sufficient to gain optimal convergence rate. 
\end{remark}
%
%

\section{Numerical Examples}

In this section, we report some numerical examples to validate our theoretical results. Furthermore, we will report the convergence test of the numerical solution in other norms. Let the exact solution be $u = (u_0,u_b)$, and the immersed weak Galerkin solution be $u_h = (u_{0h}, u_{bh})$. For simplicity, we also define the errors 
$$e_0 = u_0 - u_{0h},~~~
e_b = u_b - u_{bh}.$$
We will test the $L^\infty$, $L^2$, and semi-$H^1$ norms of $e_0$, and $L^\infty$ norm of $e_b$ in the following examples.
\begin{equation}
\|e_0\|_{L^2} =  \bigg(\sum_{T\in\mathcal{T}_h}\|u-u_0\|_T^2\bigg)^{1/2},~~
|e_0|_{H^1} = \bigg(\sum_{T\in\mathcal{T}_h}\|\nabla(u-u_0)\|_T^2\bigg)^{1/2},~~
\end{equation}
\begin{equation}
\|e_0\|_{L^\infty} =  \max_{x\in\mathcal{N}_h}\|u_0(x)-u_{0h}(x)\|,~~
\|e_b\|_{L^\infty} =  \max_{x\in\mathcal{M}_h}\|u_b(x)-u_{bh}(x)\|,~~
\end{equation}
where $\mathcal{N}_h$ and $\mathcal{M}_h$ denote the set of nodes of the mesh, and the set of midpoints of all edges of the mesh, respectively. We note that the semi-$H^1$ norm of $e_0$ is equivalent to the energy norm that we considered in the analysis. Thus, one can expect the errors measured by them give the same convergence rates. In all the numerical experiments, we take $\rho=10$. 

\subsection{Example 1} \label{sec:ex1}
We first consider a bench mark example for the elliptic interface problem which has been tested in many articles \cite{2015LinLinZhang, 2015LinYangZhang1}. Let $\Omega=(-1,1)\times(-1,1)$, which is divided into two subdomains $\Omega^{-}$ and $\Omega^{+}$ by a circular interface $\Gamma$ centered at origin with radius $r_0=\pi/5$ such that $\Omega^{-}=\{(x,y):x^2+y^2< r_0^2\}$ and $\Omega^{+}=\{(x,y):x^2+y^2> r_0^2\}$. Functions $f$ and $g$ are computed such that the analytical solution is described as follows:
\begin{eqnarray}
u(x,y)=\begin{cases}
\frac{1}{\beta^{-}}r^{\alpha}, & (x,y)\in\Omega^{-}\\
\frac{1}{\beta^{+}}r^{\alpha}+\bigg(\frac{1}{\beta^{-}}-\frac{1}{\beta^{+}}\bigg)r_0^{\alpha} & (x,y)\in\Omega^{+},
\end{cases}
\end{eqnarray}
where $r=\sqrt{x^2+y^2}$ and $\alpha=5$. We use the uniform Cartesian triangular meshes which is obtained by first partitioning the domain into $N\times N$ congruent rectangles and then connecting the top-left and bottom-right diagonal in every rectangle. We only report numerical performance for large coefficient contrasts $(\beta^-,\beta^+) = (1, 1000)$ and $(\beta^-,\beta^+) = (1000, 1)$. We also have testes some small coefficient jumps, and the numerical result is similar, hence we omit them in the paper. The numerical errors and convergence rates for these two cases are reported in Table \ref{table: error_circle_1_1000} and Table \ref{table: error_circle_1000_1}, respectively. The numerical solutions on the $128\times 128$ mesh are plotted in Figure \ref{fig: Ex1 sol}. From these Tables, we can observe clearly that the error $e_0$ in semi-$H^1$ norm converge optimally which confirms our theoretical analysis. Moreover, $e_0$ in $L^2$ and $L^\infty$ norms also converge in second-order, which is considered as optimal rate. The $e_b$ in $L^\infty$ norm seems to converge in first order, which is expected as we use the piecewise constant approximation for $u_b$.

\begin{figure}[!tb]
\begin{center}
\includegraphics[width=0.45\textwidth]{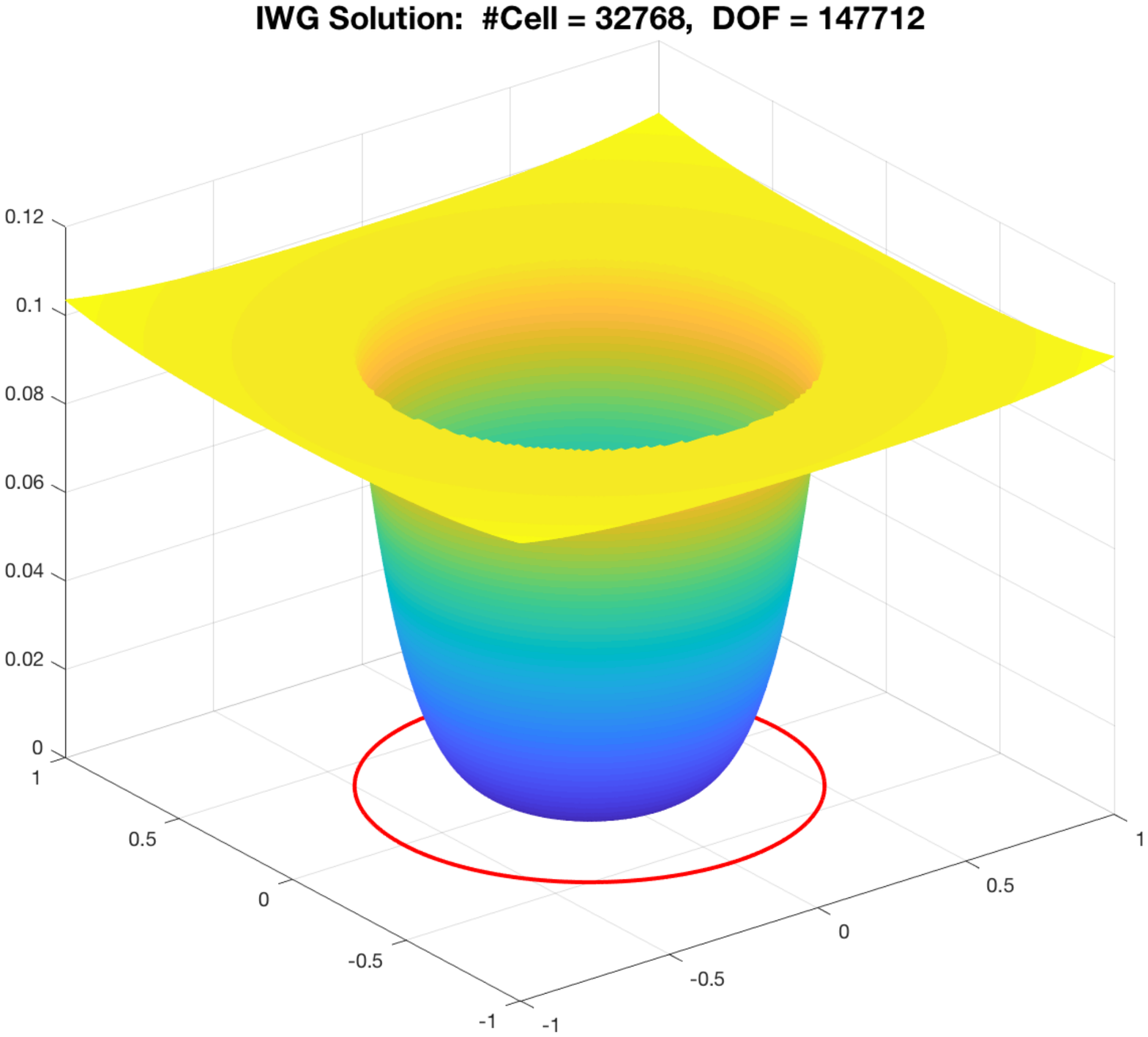}
\includegraphics[width=0.45\textwidth]{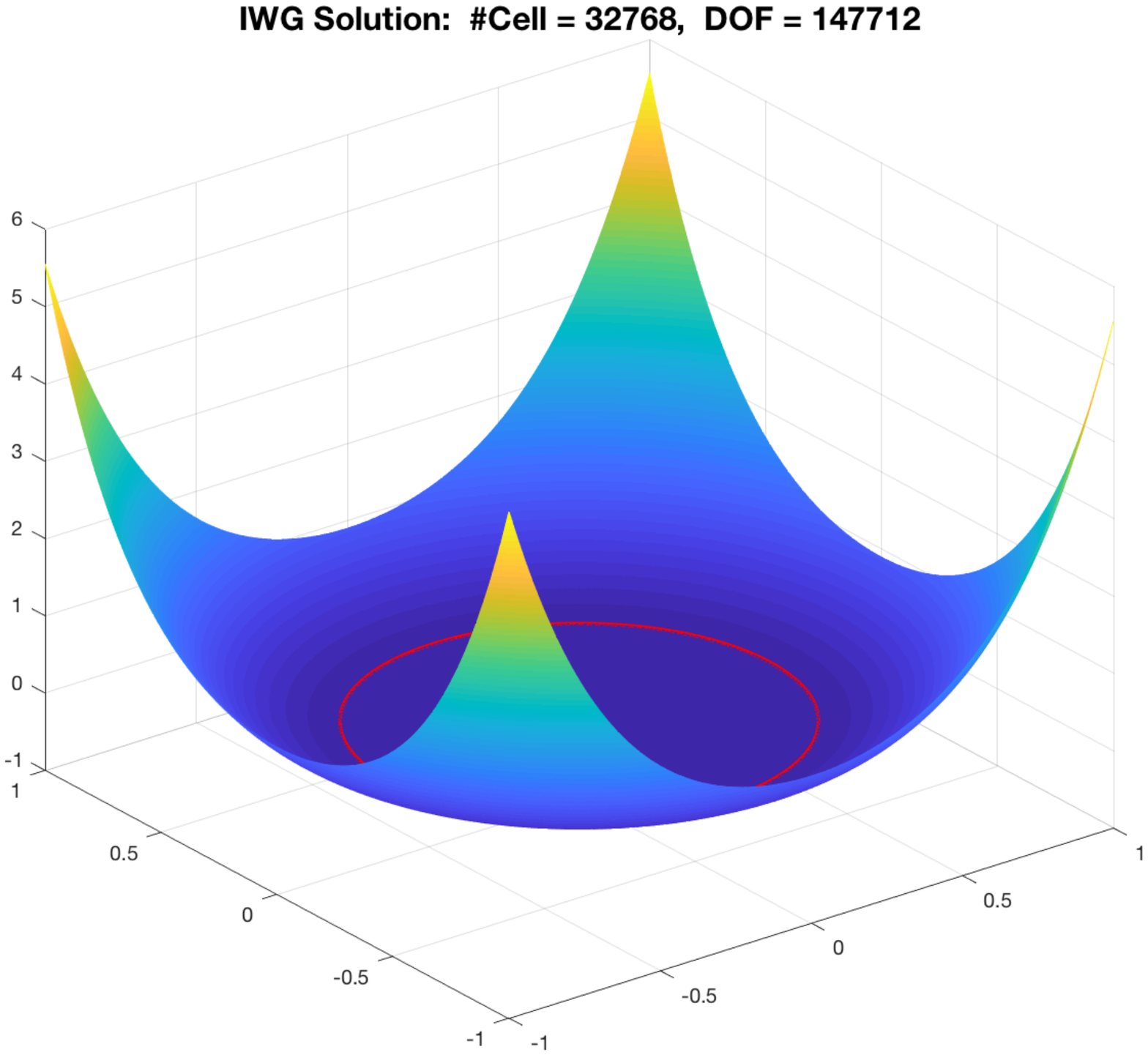}
\end{center}
\caption{Immersed Weak Galerkin solutions for Example 5.1 with $(\beta^-,\beta^+) = (1,1000)$, and $(\beta^-,\beta^+) = (1000,1)$}
\label{fig: Ex1 sol}
\end{figure}

\begin{table}
\begin{center}
\begin{tabular}{|c|c|cc|cc|cc|cc|} \hline
$N$          & DOF       &  $\|e_0\|_{\infty}$  &Order & $\|e_b\|_{\infty}$ &Order &$\|e_0\|_{L^2}$&Order    & $\|e_0\|_{H^1}$&Order \\ 
\hline
$16$     & 2.34E+3 &  1.74E-2 &              & 1.13E-2 &             &2.99E-3 &               &1.04E-1 & \\
$32$     & 9.28E+3 &  5.21E-3 & 1.74      & 6.57E-2 & 0.78     &7.81E-4 & 1.93       &4.89E-2 & 1.08 \\
$64$     & 3.70E+4 &  1.54E-3 & 1.75      & 4.06E-3 & 0.69     &1.99E-4 & 1.97       &2.44E-2 & 1.00  \\
$128$   & 1.48E+5 &  4.34E-5 & 1.83      & 1.90E-3 & 1.09     &5.11E-5 & 1.96       &1.25E-2 & 0.97  \\
$256$   & 5.90E+5 &  1.13E-4 & 1.94      & 9.90E-4 & 0.94     &1.28E-5 & 2.00       &6.28E-3 & 0.99  \\ 
$512$   & 2.26E+6 &  3.16E-5 & 1.84      & 5.05E-3 & 0.97     &3.22E-6 & 1.99       &3.15E-3 & 1.00  \\ 
$1024$ & 9.44E+6 &  7.89E-6 & 2.00      & 2.63E-4 & 0.94     &8.09E-7 & 1.99       &1.57E-3 & 1.01  \\
\hline
\end{tabular}
\caption{Errors of Immersed WG methods Circle Interface for $\beta^-=1$, $\beta^+=1000$}
\label{table: error_circle_1_1000}
\end{center}
\end{table}

\begin{table}
\begin{center}
\begin{tabular}{|c|c|cc|cc|cc|cc|} \hline
$N$          & DOF       &  $\|e_0\|_{\infty}$  &Order & $\|e_b\|_{\infty}$ &Order &$\|e_0\|_{L^2}$&Order    & $\|e_0\|_{H^1}$&Order \\ 
\hline
$16$     & 2.34E+3 &  1.81E-1 &              & 2.24E-2 &             &3.13E-2 &               &1.15E-0 & \\
$32$     & 9.28E+3 &  4.83E-2 & 1.91      & 8.81E-3 & 1.35     &7.89E-3 & 1.99       &5.76E-1 & 1.00 \\
$64$     & 3.70E+4 &  1.25E-3 & 1.95      & 4.62E-3 & 0.93     &1.98E-3 & 2.00       &2.88E-1 & 1.00  \\
$128$   & 1.48E+5 &  3.17E-3 & 1.98      & 2.03E-3 & 1.19     &4.94E-4 & 2.00       &1.44E-1 & 1.00  \\
$256$   & 5.90E+5 &  8.01E-4 & 1.99      & 1.02E-3 & 0.99     &1.23E-4 & 2.00       &7.20E-2 & 1.00  \\ 
$512$   & 2.26E+6 &  2.01E-4 & 1.99      & 5.13E-4 & 1.00     &3.09E-5 & 2.00       &3.60E-2 & 1.00  \\ 
$1024$ & 9.44E+6 &  5.04E-5 & 2.00      & 2.65E-4 & 0.96     &7.73E-6 & 2.00       &1.80E-2 & 1.00  \\
\hline
\end{tabular}
\caption{Errors of Immersed WG methods Circle Interface for $\beta^-=1000$, $\beta^+=1$}
\label{table: error_circle_1000_1}
\end{center}
\end{table}

%

\subsection{Example 2}
In this example, we test our numerical algorithm for a more complicated interface curve. We let $\Omega=(-1,1)\times(-1,1)$, and the interface is determined by the following level-set function: 
\begin{eqnarray}
    \Gamma(x,y)= (x^2 + y^2)^2(1 + 0.4\sin(6\arctan\big(\frac{y}{x}\big)))-0.3.
\end{eqnarray}
The subdomains are defined as $\Omega^+ = \{(x,y): \Gamma(x,y)>0\}$, and  $\Omega^- = \{(x,y): \Gamma(x,y)<0\}$. The exact solution is chosen as:
\begin{eqnarray}
u=\begin{cases}
\dfrac{1}{\beta^-}\Gamma(x,y) ,\ (x,y)\in\Omega^-\\
\dfrac{1}{\beta^+}\Gamma(x,y),\ (x,y)\in\Omega^+.
\end{cases}
\end{eqnarray} 
We test the high coefficient jump cases $(\beta^-,\beta^+) = (1, 1000)$ and $(\beta^-,\beta^+) = (1000, 1)$, and the error tables are reported in Table \ref{table: error_petal_1_1000}, and Table \ref{table: error_petal_1000_1}, respectively. The numerical solutions on the $128\times 128$ mesh are plotted in Figure \ref{fig: Ex2 sol}. From these data, we can observe again that the error in $H^1$- and $L^2$-norms converge in first and second order, respectively. And the infinity norm for $u_0$ and $u_b$ are close to second order and first order.  

\begin{figure}[htb]
\begin{center}
\includegraphics[width=0.45\textwidth]{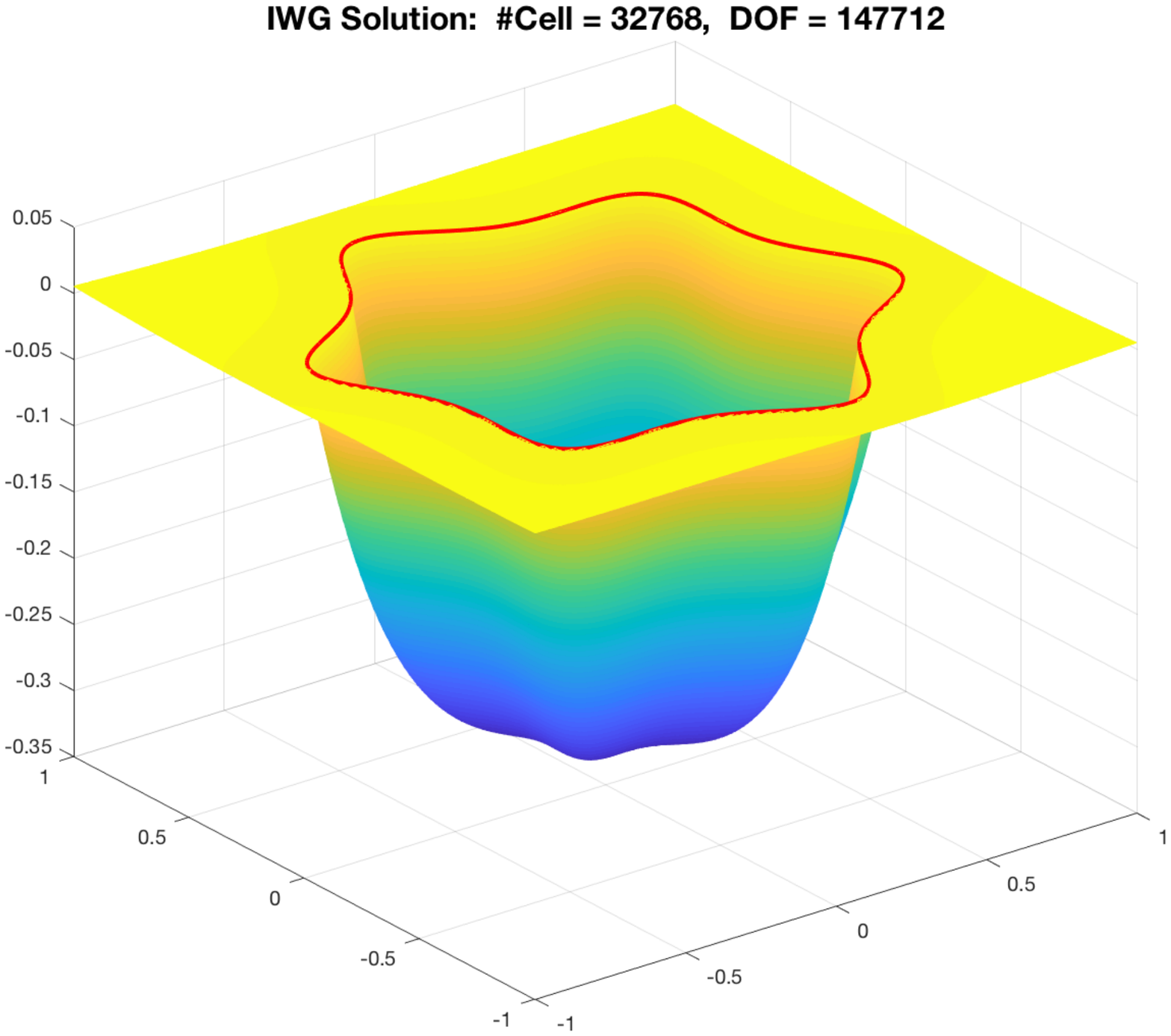}
\includegraphics[width=0.45\textwidth]{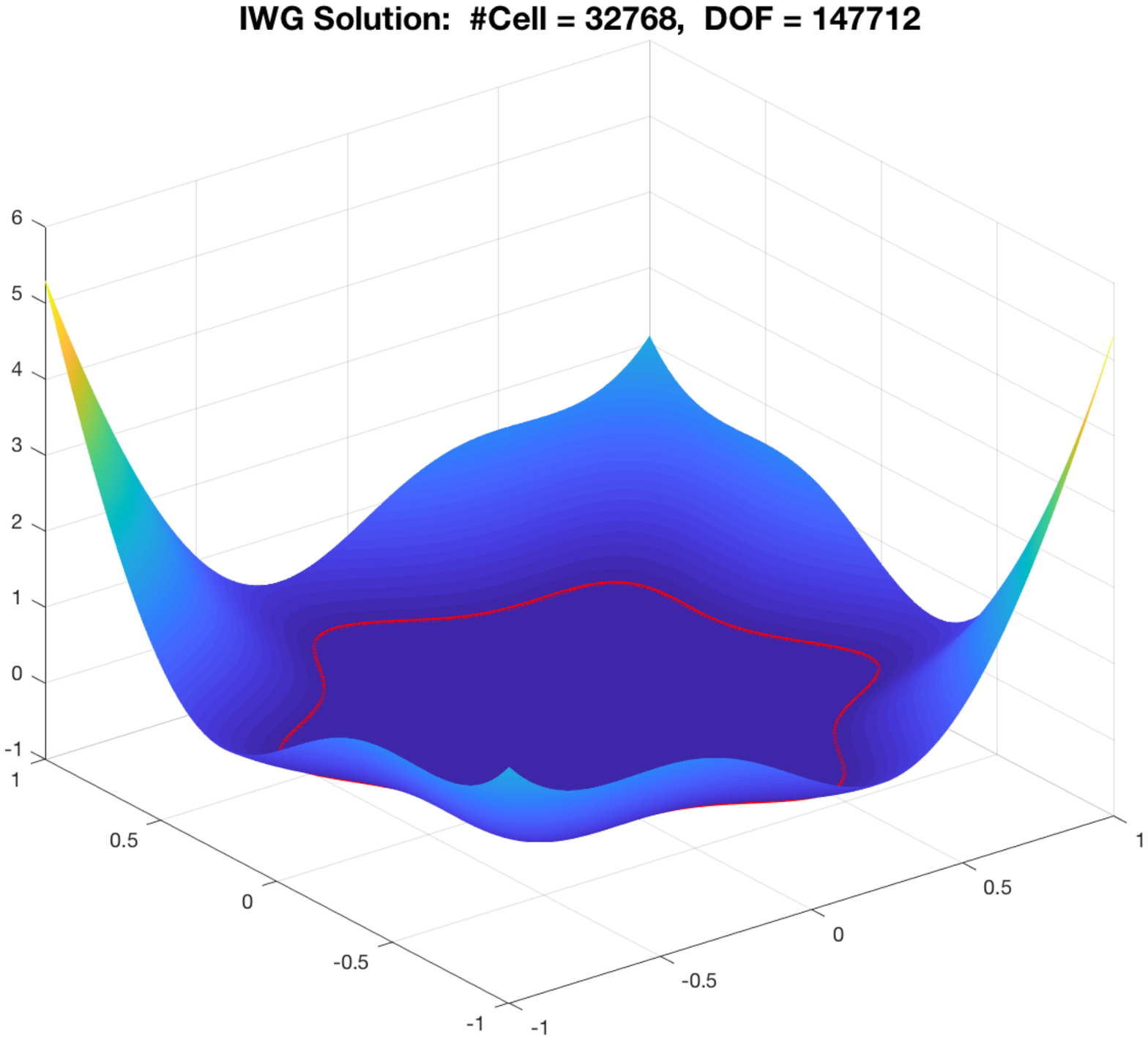}
\end{center}
\caption{Immersed Weak Galerkin solutions for Example 5.2 with $(\beta^-,\beta^+) = (1,1000)$, and $(\beta^-,\beta^+) = (1000,1)$}\label{fig: Ex2 sol}
\end{figure}

\begin{table}[htb]
\begin{center}
\begin{tabular}{|c|c|cc|cc|cc|cc|} \hline
$N$          & DOF       &  $\|e_0\|_{\infty}$  &Order & $\|e_b\|_{\infty}$ &Order &$\|e_0\|_{L^2}$&Order    & $\|e_0\|_{H^1}$&Order \\ 
\hline
$16$     & 2.34E+3 &  5.00E-2 &              & 1.79E-2 &             &8.46E-3 &               &2.89E-1 & \\
$32$     & 9.28E+3 &  1.53E-2 & 1.71      & 1.58E-2 & 0.18     &2.30E-3 & 1.88       &1.51E-1 & 0.94 \\
$64$     & 3.70E+4 &  4.36E-3 & 1.81      & 7.68E-3 & 1.04     &5.87E-4 & 1.97       &7.51E-2 & 1.01  \\
$128$   & 1.48E+5 &  1.38E-3 & 1.66      & 4.63E-3 & 0.73     &1.60E-4 & 1.88       &3.68E-2 & 1.03  \\
$256$   & 5.90E+5 &  4.36E-4 & 1.66      & 2.43E-3 & 0.93     &4.07E-5 & 1.97       &1.86E-2 & 0.99  \\ 
$512$   & 2.26E+6 &  1.36E-4 & 1.68      & 1.18E-3 & 1.04     &1.03E-5 & 1.99       &9.21E-3 & 1.01  \\ 
$1024$ & 9.44E+6 &  3.98E-5 & 1.77      & 6.00E-4 & 0.98     &2.60E-6 & 1.98       &4.58E-3 & 1.01  \\
\hline
\end{tabular}
\caption{Errors of Immersed WG methods Petal Interface for $\beta^-=1$, $\beta^+=1000$}
\label{table: error_petal_1_1000}
\end{center}
\end{table}

\begin{table}[htb]
\begin{center}
\begin{tabular}{|c|c|cc|cc|cc|cc|} \hline
$N$          & DOF       &  $\|e_0\|_{\infty}$  &Order & $\|e_b\|_{\infty}$ &Order &$\|e_0\|_{L^2}$&Order    & $\|e_0\|_{H^1}$&Order \\ 
\hline
$16$     & 2.34E+3 &  1.40E-1 &              & 3.85E-2 &             &3.01E-2 &               &9.50E-1 & \\
$32$     & 9.28E+3 &  3.81E-2 & 1.88      & 1.68E-2 & 1.20     &7.86E-3 & 1.93       &4.84E-1 & 0.97 \\
$64$     & 3.70E+4 &  9.86E-3 & 1.95      & 8.03E-3 & 1.06     &2.02E-3 & 1.96       &2.43E-1 & 0.99  \\
$128$   & 1.48E+5 &  2.51E-3 & 1.98      & 4.64E-3 & 0.79     &5.06E-4 & 1.99       &1.21E-2 & 1.00  \\
$256$   & 5.90E+5 &  7.41E-4 & 1.76      & 2.43E-3 & 0.93     &1.26E-4 & 2.01       &6.07E-2 & 1.00  \\ 
$512$   & 2.26E+6 &  1.58E-4 & 2.23      & 1.19E-3 & 1.03     &3.15E-5 & 2.00       &3.03E-2 & 1.00  \\ 
$1024$ & 9.44E+6 &  3.97E-5 & 2.00      & 6.01E-4 & 0.98     &7.87E-6 & 2.00       &1.52E-2 & 1.00  \\
\hline
\end{tabular}
\caption{Errors of Immersed WG methods Petal Interface for $\beta^-=1$, $\beta^+=1000$}
\label{table: error_petal_1000_1}
\end{center}
\end{table}

\subsection{Example 3} 
In this example, we consider the case when the interface has a sharp corner. This example has been used in \cite{2010KwakWeeChang}. Let $\Omega=(-1,1)\times(-1,1)$, and the interface is defined by the level-set function:
\begin{eqnarray}
\Gamma(x,y)=-y^2+((x-1)\tan(\theta))^2x.
\end{eqnarray}
The subdomains are defined as $\Omega^+ = \{(x,y): \Gamma(x,y)>0\}$, and  $\Omega^- = \{(x,y): \Gamma(x,y)<0\}$. The exact solution is chosen as:
\begin{eqnarray}\label{eq:ex3-sol}
u=\begin{cases}
\dfrac{1}{\beta^-}\Gamma(x,y) ,\ (x,y)\in\Omega^-,\\
\dfrac{1}{\beta^+}\Gamma(x,y),\ (x,y)\in\Omega^+.
\end{cases}
\end{eqnarray} 
The right hand function $f$ is chosen accordingly to fit the exact solution $u(x, y)$ in \eqref{eq:ex3-sol}. We note that on the point $(1,0)$, the interface curve has a sharp corner. We slightly adjust our uniform mesh such that an odd number of partition in each direction. By doing this, the singular point will be located in one of the mesh point. The performance of our proposed numerical scheme is reported in Table \ref{table: error_singular_1_1000}-\ref{table: error_singular_1000_1}. Similar conclusions as previous ones can be made for such convergence tests. Furthermore, the numerical solutions are plotted in Figure \ref{fig: Ex3 sol} for varying values in $\beta$.

\begin{figure}[htb]
\begin{center}
\includegraphics[width=0.45\textwidth]{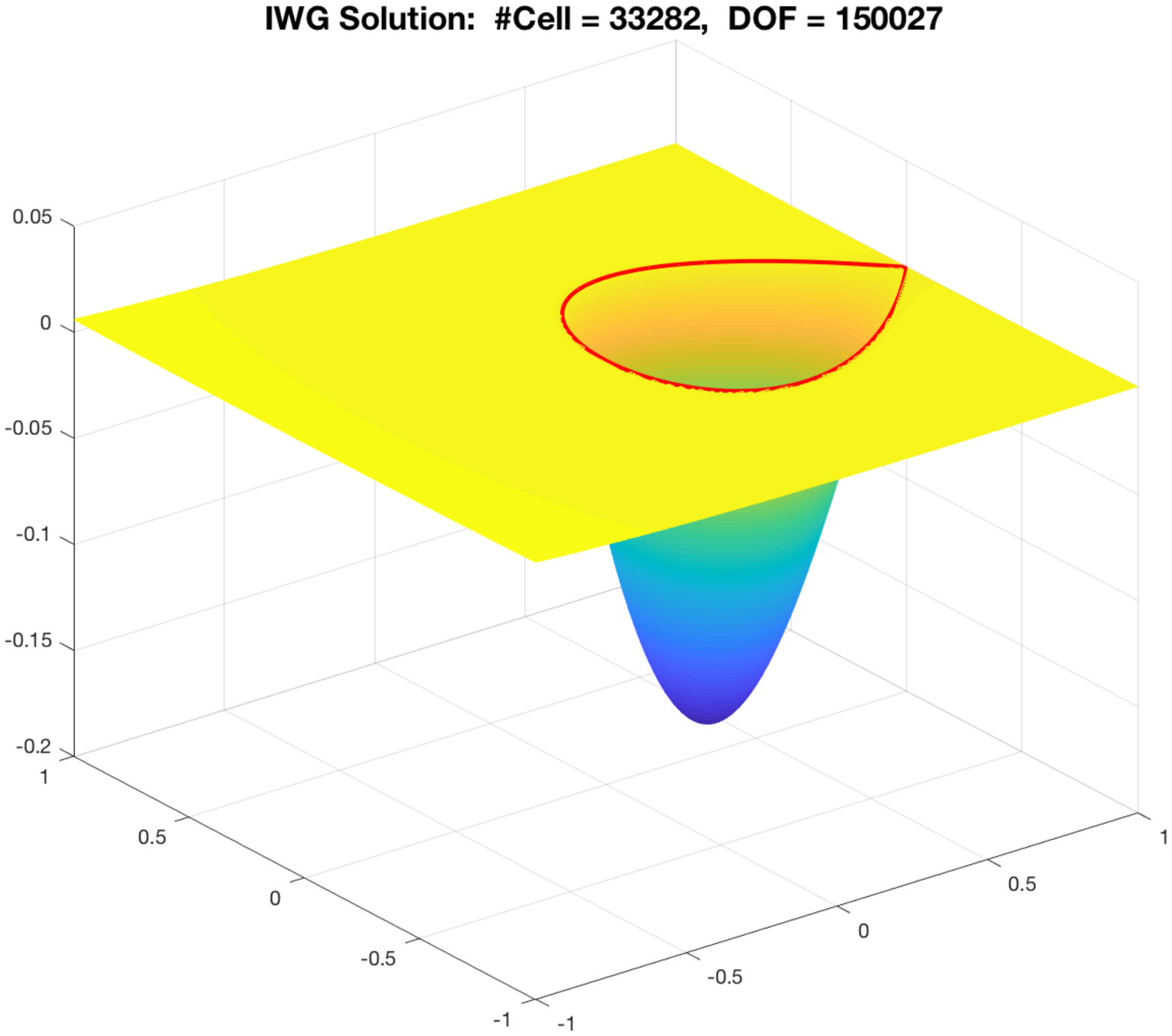}
\includegraphics[width=0.45\textwidth]{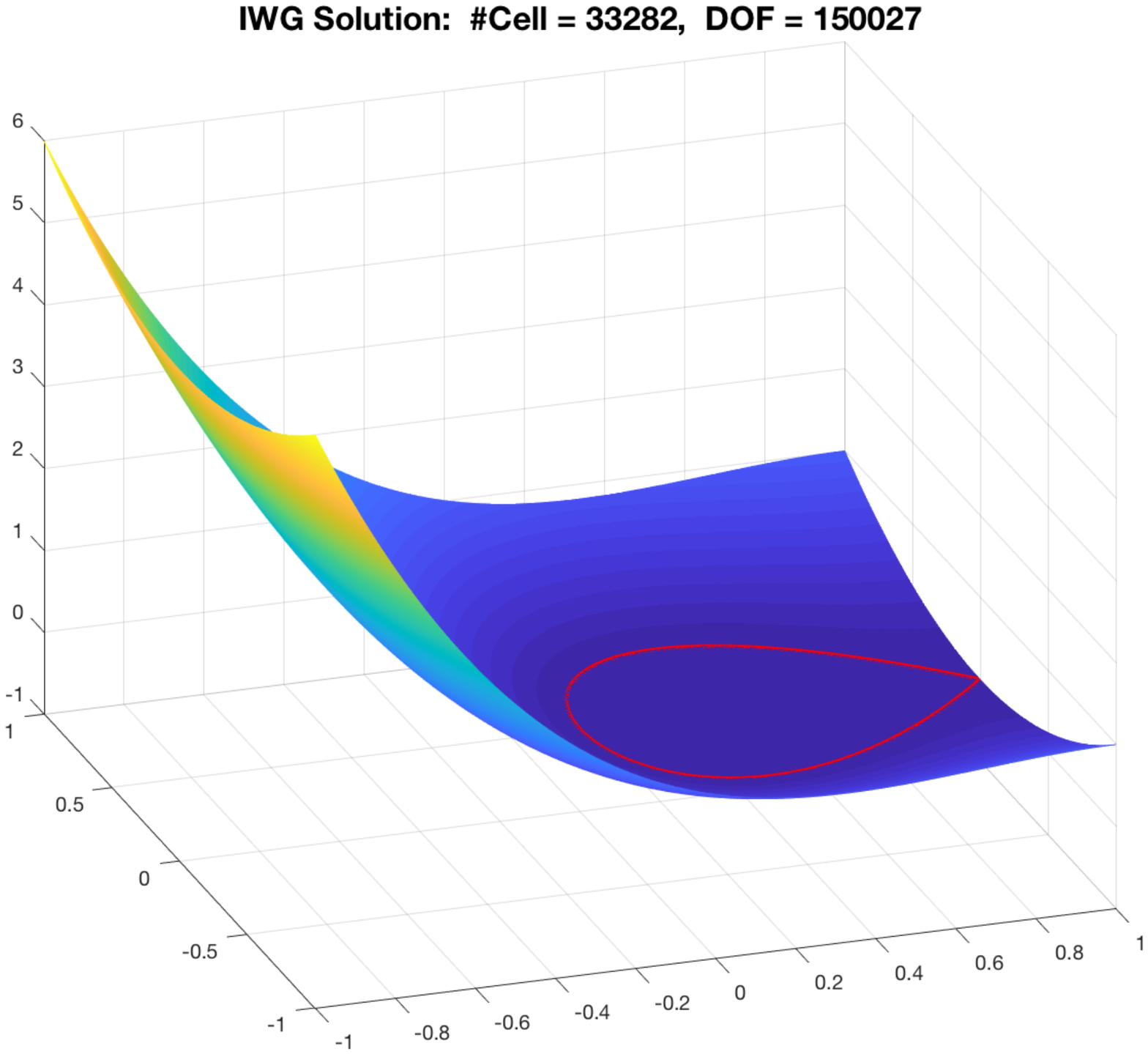}
\end{center}
\caption{Immersed Weak Galerkin solutions for Example 5.3 with $(\beta^-,\beta^+) = (1,1000)$, and $(\beta^-,\beta^+) = (1000,1)$}\label{fig: Ex3 sol}
\end{figure}

\begin{table}[htb]
\begin{center}
\begin{tabular}{|c|c|cc|cc|cc|cc|} \hline
$N$          & DOF       &  $\|e_0\|_{\infty}$  &Order & $\|e_b\|_{\infty}$ &Order &$\|e_0\|_{L^2}$&Order    & $\|e_0\|_{H^1}$&Order \\ \hline
$17$     & 2.64E+3 &  6.55E-2 &              & 1.72E-2 &             &1.65E-2 &               &4.88E-1 & \\
$33$     & 9.87E+3 &  1.77E-2 & 1.89      & 7.47E-3 & 1.20     &4.41E-3 & 1.90       &2.52E-1 & 0.95 \\
$65$     & 3.82E+4 &  4.60E-3 & 1.94      & 5.15E-3 & 0.54     &1.11E-3 & 1.98       &1.28E-1 & 0.98  \\
$129$   & 1.50E+5 &  1.17E-3 & 1.97      & 2.44E-3 & 1.08     &2.75E-4 & 2.02       &6.45E-2 & 0.99  \\
$257$   & 5.95E+5 &  2.96E-4 & 1.99      & 1.27E-3 & 0.95     &6.88E-5 & 2.00       &3.24E-3 & 0.99  \\ 
$513$   & 2.37E+6 &  7.44E-5 & 1.99      & 6.96E-4 & 0.86     &1.71E-5 & 2.00       &1.62E-3 & 1.00  \\ 
$1025$ & 9.46E+6 &  1.87E-5 & 2.00      & 3.56E-4 & 0.97     &4.28E-6 & 2.00       &8.11E-3 & 1.00  \\ 
\hline
\end{tabular}
\caption{Errors of Immersed WG methods Singular Interface for $\beta^-=1$, $\beta^+=1000$}
\label{table: error_singular_1_1000}
\end{center}
\end{table}

\begin{table}[htb]
\begin{center}
\begin{tabular}{|c|c|cc|cc|cc|cc|} \hline
$N$          & DOF       &  $\|e_0\|_{\infty}$  &Order & $\|e_b\|_{\infty}$ &Order &$\|e_0\|_{L^2}$&Order    & $\|e_0\|_{H^1}$&Order \\ 
\hline
$17$     & 2.64E+3 &  1.48E-2 &              & 1.49E-2 &             &2.41E-3 &               &8.58E-2 & \\
$33$     & 9.87E+3 &  5.79E-3 & 1.36      & 8.62E-3 & 0.79     &6.73E-4 & 1.84       &4.42E-2 & 0.96 \\
$65$     & 3.82E+4 &  2.17E-3 & 1.42      & 5.30E-3 & 0.70     &1.69E-4 & 2.00       &2.28E-2 & 0.95  \\
$129$   & 1.50E+5 &  5.20E-4 & 2.06      & 2.46E-3 & 1.11     &3.80E-5 & 2.15       &1.09E-2 & 1.06  \\
$257$   & 5.95E+5 &  1.21E-4 & 2.10      & 1.27E-3 & 0.95     &9.25E-6 & 2.04       &5.48E-3 & 1.00  \\ 
$513$   & 2.37E+6 &  3.14E-5 & 1.95      & 6.95E-4 & 0.87     &2.25E-6 & 2.04       &2.71E-3 & 1.01  \\ 
$1025$ & 9.46E+6 &  8.06E-6 & 1.96      & 3.56E-4 & 0.97     &5.58E-7 & 2.01       &1.36E-3 & 1.00  \\ 
\hline
\end{tabular}
\caption{Errors of Immersed WG methods Singular Interface for $\beta^-=1000$, $\beta^+=1$}
\label{table: error_singular_1000_1}
\end{center}
\end{table}


\end{document}